\newcommand{\acuta}{ACUTA}
\newcommand{\dm}{DM}
\newcommand{\grip}{GRIP}
\newcommand{\uta}{UTA}
\newcommand{\utap}{UTA-poly}
\newcommand{\utasp}{UTA-splines}
\newcommand{\utadis}{UTADIS}
\newcommand{\utadisp}{UTADIS-poly}
\newcommand{\utadissp}{UTADIS-splines}
\newcommand{\utagms}{UTA-GMS}
\newcommand{\utastar}{UTA-STAR}
\newcommand{\transp}{\mathsf{T}}
\newcommand{\mcda}{MCDA}
\newcommand{\sos}{SOS}
\newtheorem{theorem}{Theorem}
\newtheorem{lemma}[theorem]{Lemma}
\begin{document}

\begin{frontmatter}
\title{\utap{} and \utasp{}: additive value functions with polynomial
marginals}
\author[umons,centrale]{Olivier Sobrie}
\ead{olivier.sobrie@gmail.com}
\author[umons]{Nicolas Gillis}
\ead{nicolas.gillis@umons.ac.be}
\author[centrale]{Vincent Mousseau}
\ead{vincent.mousseau@centralesupelec.fr}
\author[umons]{Marc Pirlot}
\ead{marc.pirlot@umons.ac.be}

\address[umons]{Université de Mons, Faculté Polytechnique, 9 rue de
Houdain, 7000 Mons, Belgium}
\address[centrale]{CentraleSupélec, Laboratoire Génie Industriel,
Grande Voie des Vignes, 92295 Châtenay-Malabry, France}

\begin{abstract}

Additive utility function models are widely used in multiple criteria
decision analysis.
In such models, a numerical value is associated to each alternative
involved in the decision problem.
It is computed by aggregating the scores of the alternative on the
different criteria of the decision problem.
The score of an alternative is determined by a marginal value function
that evolves monotonically as a function of the performance of the
alternative on this criterion.
Determining the shape of the marginals is not easy for a decision maker.
It is easier for him/her to make statements such as ``alternative $a$ is
preferred to $b$''.
In order to help the decision maker, \uta{} disaggregation procedures use linear programming to approximate the marginals by piecewise linear
functions based only on such statements.
In this paper, we propose to infer polynomials and splines instead of piecewise linear
functions for the marginals. In this aim, we use semidefinite programming instead of linear programming.
We illustrate this new elicitation method and present some experimental
results.

\end{abstract}

\begin{keyword}
Multiple criteria decision analysis
\sep UTA method
\sep Additive value function model
\sep Preference learning
\sep Disaggregation
\sep Ordinal regression
\sep Semidefinite programming
\end{keyword}

\end{frontmatter}


\section{Introduction}

The theory of value functions aims at assigning a number to each alternative in such a way that the decision maker's preference order on the alternatives is the same as the order on the numbers associated with the alternatives. The number or value associated to an alternative is a monotone function of its evaluations on the various relevant criteria. For preferences satisfying some additional properties (including \textit{preferential independence}), the value of an alternative can be obtained as the sum of marginal value functions each depending only on a single criterion \citep[][Chapter 6]{keeneyraiffa1976}.

These functions usually are monotone, i.e., marginal value functions either
increase or decrease with the assessment of the alternative on the associated criterion.
Many questioning protocols have been proposed aiming to elicit an additive value function \citep{keeneyraiffa1976,fishburn67} through interactions with the decision maker (DM). These \textit{direct} elicitation methods are time-consuming and require a substantial cognitive effort from the DM. Therefore, in certain cases, an indirect approach may prove fruitful. The latter consists in \textit{learning} an additive value model (or a set of such models) from a set of declared or observed preferences. In case we know that the DM prefers alternative $a^{i}$ to $b^{i}$ for some pairs $(a^{i}, b^{i}),\ i= 1, 2, \ldots$, we may infer a model that is compatible with these preferences. Learning approaches have been proposed not only for inferring an additive value function that is used to rank all other alternatives. They have also been used for sorting alternatives in ordered categories \citep{yu1992,roybouyssou1993,zopounidisdoumpos2002}. In this model, an alternative is assigned to a category (e.g.\ ``Satisfactory'', ``Intermediate'', ``Not satisfactory'') whenever its value passes some threshold and does not exceed some other, which are respectively the lower and upper values of the alternatives to be assigned to this category.

The \uta{} method \citep{jaquetlsiskos1982} was the original proposal for this purpose. 
It uses a linear programming formulation to determine piecewise linear marginal value functions that are compatible with the DM's known preferences.
Several variants of this idea for learning a piecewise linear additive value function on the basis of examples of ordered pairs of alternatives  are described in \cite{jaquetlsiskos2001}. The variant used for inferring a rule to assign alternatives to ordered categories on the basis of assignment examples is called \utadis{} in \cite{zopounidisdoumpos99} (see also \cite{zopounidisdoumpos2002}). The interested reader is referred to \cite{SiskosInErgFigGre05} for a comprehensive review of \uta{} methods, their variants and developments.

A problem with these methods is that, often, the information available about the DM's preferences is far from determining a single additive value function. In general, the set of piecewise linear value functions compatible with the partial knowledge of the DM's preferences is a polytope in an appropriate space. Therefore the learning methods that have been proposed either select a ``representative'' value function or they work with all possible value functions and derive \textit{robust} conclusions, i.e.\ information on the DM's preference that does not depend on the particular choice of a value function in the polytope.
Among the latter, one may cite \utagms{} \citep{grecoetal2008,grecoetal2010} and \grip{} \citep{figueiraetal2009b}. This research avenue is known under the name  \textit{robust ordinal regression} methods.

The original approach has to face the issue of defining what is a ``representative'' value function or a default value function. \utastar{} \citep{jaquetlsiskos1982,siskosyanacopoulos1985} solves the problem implicitly by returning an ``average solution'' computed as the mean of ``extreme'' solutions (this approach is sometimes referred to as ``post-optimality analysis'' \citep{DoumposEtAl2014}). Although, \utastar{} does not give any formal definition of a representative solution, it returns a solution that tends to lie ``in the middle'' of the polytope determined by the constraints. The idea of centrality, as a definition of representativeness, has been illustrated with the \acuta{} method \cite{bousetal2010}, in which the selected value function corresponds to the analytic center of the polytope, and the other formulation, using the Chebyshev center \citep{DoumposEtAl2014}. On the other hand, \cite{KadzinskiEtal2012} propose a completely different approach to the idea of representativeness. They define five targets and select a representative value function taking into account a prioritization of the targets by the DM in the context of robust ordinal regression methods.  The same authors also proposed a method for selecting a representative value function for robust sorting of alternatives in ordered categories \citep{GrecoEtAlReprVFSorting2011}.

In all the approaches aiming to return a ``representative'' value function, the marginal value functions are piecewise linear. The choice of such functions is historically motivated by the opportunity of using linear programming solvers (except for \acuta{} \cite{bousetal2010}). Although piecewise linear functions are well-suited for approximating monotone continuous functions, their lack of smoothness (derivability) may make them seem ``not natural'' in some contexts, especially for economists.  Brutal changes in slope at the breakpoints is difficult to explain and justify. Therefore, using smooth functions as marginals is advantageous from an interpretative point of view.

The MIIDAS system \cite{siskosetalejor99} proposes tools to model marginal value functions. Possibly non-linear (and even non-monotone) shapes of marginals can be chosen from parameterized families of curves. The value of the parameters is adjusted by using \textit{ad hoc} techniques such as the midpoint value.  In \cite{burgeraetal2002}, the authors propose an inference method based
on a linear program that infers quadratic utility functions in the context of an application to the banking sector.

In this paper, we propose another approach to build the marginals, which is
based on \textit{semidefinite programming}. It allows for learning marginals which are composed of one or several
polynomials of degree $d$, $d$ being fixed a priori. Besides facilitating the interpretations of the returned marginals, using such functions increases the descriptive power of the model, which is of secondary importance for decision aiding but may be valuable in other applications. In particular, in machine learning, learning sets may involve thousands of pairs of ordered alternatives or assignment examples, which may provide an advantage to more flexible models. Beyond these advantages, the most striking aspect of this work is the fact that a single new optimization technique allows us to deal with polynomial of any degree and piecewise polynomial marginals instead of piecewise linear marginals. The semidefinite programming approach used in this paper for UTA might open new perspectives for the elicitation of other preference models based on additive or partly additive value structures, such as additive differences models (MACBETH \cite{bana1994macbeth,bana2005}), and GAI networks \citep{GonzalesEtAl2011}.

This paper contributes to the field of preference elicitation by proposing a new way to model marginal value functions using polynomials or splines instead of piecewise linear value functions. The paper is organized as follows.
Section 2 recalls the principles of \uta{} methods.
We then describe a new method called \utap{} which computes each marginal as a degree $d$
polynomial instead of a piecewise linear function.
Section 4 introduces another approach called \utasp{} which is a
generalization of \uta{} and \utap{}. The shape of the marginals used by \utasp{} are piecewise polynomials or polynomial splines. These methods can be used either for ranking alternatives or for sorting them in ordered categories.
The next section gives an illustrative example of the use of \utap{} and \utasp{}.
Finally, we present experimental results comparing the new methods with \uta{} both in terms of accuracy, model retrieval and computational effort.


\section{UTA methods}

In this section we briefly recall the basics of the additive value function model (see \cite{keeneyraiffa1976} for a classical exposition) and two
inference methods that are based on this model.

\subsection{Additive utility function models}

Let $\succsim$ denote the preference relation of a DM on a set of alternatives.
We assume that each of these alternatives is fully described by a $n$-dimensional vector the components of which are the evaluations of the alternative w.r.t.{} $n$ criteria or attributes. Under some conditions, among which preferential independence (see \cite{keeneyraiffa1976}, p.110), such a preference can be represented by means of an additive value function. To be more precise, let $a$ (resp. $b$) denote an alternative described by the vector $(a_1, \ldots, a_n)$ (resp. $(b_1, \ldots, b_n)$) of its evaluations on $n$ criteria. The preference of the DM is representable by an additive value function if there is a function $U$ which associates a value (or score) to each alternative in such a way that $U(a)\geq U(b)$ whenever the DM prefers $a$ to $b$ ($a \succsim b$) and
\begin{equation}\label{eq:addUtil}
    U(a) = \sum_{j=1}^n w_j u_j(a_j),
\end{equation}
where $u_j$ is a marginal value function defined on the scale or range of criterion $j$ and $w_j$ is a weight or tradeoff associated to criterion $j$. Weights can be normalized w.l.o.g., i.e.\@ $\sum_{j=1}^n w_j = 1$.

In the sequel, we assume that the range of each criterion $j$ is an interval $[v_{1,j}, v_{2,j}]$ of an ordered set, e.g.\@ the real line. We assume w.l.o.g.\ that, along each criterion, the DM's preference increases with the evaluation (the larger the better). We also assume that the marginal value functions are normalized, i.e.\  $u_j(v_{1,j}) = 0$ for all $j$ and $\sum_{j=1}^n u(v_{2,j}) =1$.

Model \eqref{eq:addUtil} can be rewritten by integrating the weights in the
marginal value functions as follows: 	$u^*_j(a_j) = w_j \cdot u_j(a_j) \text{ for all } j \in N = \{1, \ldots, n\}.$

Equation \eqref{eq:addUtil} can then be reformulated as follows:
\begin{equation}
	U(a) = \sum_{j = 1}^n u^*_j(a_j).
	\label{eq-global_utility2}
\end{equation}
The marginal value functions, or, more briefly, the marginals $u^*_j$ take their values in the interval $[0,w_j]$, for all $j \in N$.
Note that a preference $\succsim$ that can be represented by a value function is necessarily a weak order, i.e.\ a transitive and complete relation. Such a relation is also called a ranking (ties are allowed).

\subsection{UTA methods for ranking and sorting problems}\label{ssec:utaMethods}

The \uta{} method was originally designed \cite{jaquetlsiskos1982} to learn the preference relation of the DM on the basis of partial knowledge of this preference. It is supposed that the DM is able to rank some pairs of alternatives \textit{a priori}, without further analysis. Assuming that the DM's preference on the set of all alternatives is a ranking which is representable by an additive value function, \uta{} is a method for learning one such function which is compatible with the DM's a priori ranking of certain pairs of alternatives.

Let $\mathcal{P}$ denote the set of pairs of alternatives
$(a,b)$ such that the DM knows a priori that he/she strictly prefers $a$ to $b$.
More precisely, if $(a,b) \in \mathcal{P}$, we have $a\succ b$, which means $a \succsim b$ and not $[b \succsim a]$. The DM may also know that he/she is indifferent between some pairs of alternatives. These constitute the set $\mathcal{I}$. Whenever $(a,b) \in \mathcal{I}$, we have $a \sim b$, i.e.\ $a \succsim b$ and $b \succsim a$.
We denote by $A^*$ the set containing the learning alternatives, i.e.\@ these used for the comparisons in sets $\mathcal{P}$ and $\mathcal{I}$.
These two sets and the vectors of performances of the alternatives
contained in these two sets constitute the learning set which serves as input  to the
learning algorithm.

Linear programming is used to infer the parameters of the \uta{} model.
Each pairwise comparison of the set $\mathcal{P}$ and $\mathcal{I}$ is
translated into a constraint.
For each pair of alternatives $(a,b) \in \mathcal{P}$, we have $U(a) -
U(b) > 0$ and for each pair of alternatives $(a,b) \in \mathcal{I}$, we
have $U(a) - U(b) = 0$.
Note that these constraints may prove incompatible.
In order to have a feasible linear program in all cases, two positive slack variable,
$\sigma^+(a)$ and $\sigma^-(a)$, are introduced for each alternative in $A^*$.
The objective function of \uta{} is given by:
\begin{equation}
	\min_{u^*_j}{\sum_{a \in A^*}
		\left( \sigma^+(a) + \sigma^-(a) \right)}
\label{eq-uta_objective}
\end{equation}
and the constraints by:
\begin{equation}
\left\{
\begin{array}{rclr}
	U(a) - U(b)
		+ \sigma^+(a) - \sigma^-(a)
		- \sigma^+(b) + \sigma^-(b) & > & 0
		& \forall (a,b) \in \mathcal{P},\\
	U(a) - U(b)
		+ \sigma^+(a) - \sigma^-(a)
		- \sigma^+(b) + \sigma^-(b) & = & 0
		& \forall (a,b) \in \mathcal{I},\\
	\sum_{j = 1}^n u^*_j(v_{2,j}) & = & 1,\\
	u^*_j(v_{1,j}) & = & 0 & \forall j \in N,\\

	\sigma^+(a) & \geq & 0 & \forall a \in A^*,\\
	\sigma^-(a) & \geq & 0 & \forall a \in A^*,\\
	u^*_j & \multicolumn{2}{l}{\text{monotonic}} & \forall j \in N.\\
\end{array}
\right.
\label{eq-uta_constraints}
\end{equation}

If we assume that the unknown marginals $u^*_j$ are piecewise linear, all the constraints above can be formulated in linear fashion and the corresponding optimization program can be handled by a LP solver. Note that the range $[v_{1,j}, v_{2,j}]$ of each criterion $j$ has to be split in a number of segments that have to be fixed a priori (i.e.\@ they are not variables in the program).
%

A variant of \uta{} for learning to sort alternatives in ordered categories is known as \utadis{}. The idea was formulated in the initial paper \cite{jaquetlsiskos1982} and further used and developed in \cite{doumposzopounidis2002,zopounidisdoumpos99}.
Let $C_1, \ldots, C_p$ denote the categories. They are numbered in increasing order of preference, i.e., an alternative assigned to $C_h$ is preferred to any alternative assigned to $C_{h'}$ for $1 \leq h' < h \leq p$. It is assumed that the alternatives assignment is compatible with the dominance relation, i.e., an alternative which is at least as good as another on all criteria is not assigned to a lower category. The learning set consists of a subset of alternatives of which the assignment to one of the categories is known (or the DM is able to assign these alternatives a priori). The problem is to learn an additive value function $U$ and $p-1$ thresholds $U_1, \ldots, U_{p-1}$ such that alternative $a$ is assigned to category $C_h$ if $U_{h-1} \leq U(a) < U_h$ for $h=1$ to $p$ (setting $U_0$ to 0 and $U_p$ to infinity, i.e. a sufficiently large value). A mathematical programming formulation of this problem is easily obtained by substituting the first two lines of \eqref{eq-uta_constraints} by the following three sets of constraints:
\begin{equation}
\left\{
\begin{array}{rclr}
	U(a) + \sigma^+(a) & \geq & U^{h - 1}
		& \forall a \in A^{*h}, h = \{2, ..., p\},\\
	U(a) - \sigma^-(a) & < & U^h
		& \forall a \in A^{*h}, h = \{1, ..., p-1\},\\

	U^h & \geq & U^{h-1} & h = \{2, ..., p-1\},\\
%
%
\end{array}
\right.
\label{eq-utadis_constraints}
\end{equation}
where $A^{*h}$ denotes the alternatives in the learning set that are assigned to category $C_h$. Assuming that marginals are piecewise linear, allows for a linear programming formulation as it is the case with \uta{}.


\section{\utap{}: additive value functions with polynomial marginals}

In this section we present a new way to elicit marginal value functions
using semidefinite programming.
We first give the motivations for this new method.
Then we describe it.

\subsection{Motivation}

\uta{} methods use piecewise linear functions to model the marginal
value functions.
Opting for such functions allows to use the linear programs presented in the previous section and linear
programming solvers to infer an additive value ranking or sorting model.
However by considering piecewise linear marginals with breakpoints at predefined places, original \uta{}
methods have two important drawbacks: these options limit the interpretability and flexibility of the additive value model.

\smallskip \noindent \textit{Interpretability.} There is a longstanding tradition in Economics, especially in the classical theory of consumer behavior (see e.g.\ \cite{SilberbergSuen2001}), which assumes that utility (or value) functions are differentiable and interpret their first and second (partial) derivatives in relation with the preferences and behavior of the customer. Multiple criteria decision analysis, based on value functions, stems from the same tradition. Tradeoffs or marginal rates of substitution are generally thought of as changing smoothly (see e.g.\ \cite{keeneyraiffa1976}, p. 83 :``Throughout we assume that we are in a well-behaved world where all functions have smooth second derivatives''). Although piecewise linear marginals can provide good approximations for the value of any derivable function, they are not fully satisfactory as an explanatory model. This is especially the case when the breakpoints are fixed arbitrarily (e.g.\ equally spaced in the criterion domains). Such a choice may well fail to correctly reflect the DM's feelings about where the marginal rate of substitution starts to grow more quickly (resp. to diminish) or shows an inflexion. In other words, the qualitative behavior of the first and second derivatives of the ``true'' marginal value function might be poorly approximated by resorting to piecewise linear models, while this behavior might have an intuitive meaning for the DM. Therefore, considering piecewise linear marginals might lead to final models that fail to convince the DM even though they fit the learning set accurately.

%

\smallskip
\noindent \textit{Flexibility.} Restricting the shape of the marginals to piecewise linear functions with a fixed number of pieces may hamper the expressivity of the additive value function model. This is especially detrimental when large learning sets are available as is the case in Machine Learning applications\footnote{It is seldom so in \mcda{} applications where the size of the learning set rarely exceeds a few dozens records.}.

The following \textit{ad hoc} case aims to illustrate the loss in flexibility incurred due to the piecewise linear hypothesis. We hereafter illustrate the case of a single piece, i.e.\ the linear case, whereas the same question arises whatever the fixed number of segments.
Consider a ranking problem in which alternatives are assessed on two criteria.
The \dm{} states that the top-ranked alternatives are $a$, $b$, which are tied (rank 1), followed by $c$ (rank 2) while $d$ is strictly less preferred than the others (rank 3).
The evaluations and ranks of these alternatives are displayed in Table \ref{table-example_utap_flexibility}.

\begin{table}
\centering
\begin{tabular}{rrrr}
	\toprule
	alternative & criterion 1 & criterion 2& rank\\
	\midrule
	$a$ & 100 & 0 & $1$\\
	$b$ & 0 & 100 & $1$\\
	$c$ & 25 & 75 & $2$\\
	$d$ & 75 & 25 & $3$\\
	\bottomrule
\end{tabular}
\caption{Example of an alternatives ranking that is not representable
with a \uta{} model (one linear piece per marginal).}
\label{table-example_utap_flexibility}
\end{table}

Assume that we plan to use a \uta{} model with marginals involving a single linear piece (i.e.\ a weighted sum).
Such an \uta{} model cannot at the same time distinguish $c$ and $d$ and express that $a$ and $b$ are tied.
The fact that $a$ and $b$ are tied indeed implies that the criteria weights are equal (we can set them to $0.5$ w.l.o.g.).
The value on each marginal varies from 0 to 0.5.
The worst value (0) corresponds to the worst performance (0) and the best value (0.5) to the best performance (100) on each
criterion (see the marginal value functions represented by dashed lines in Figure
\ref{fig-example_utap_vs_uta}).
Using these marginals, the scores of the four alternatives are
obtained through linear interpolation and displayed in Table \ref{table-scores_uta_utap}.
We observe that all alternatives receive the same value 0.5.
It is therefore not possible to discriminate alternatives $c$ and $d$ without increasing the number of linear pieces or considering nonlinear marginals.
In this case, we shall consider using non-linear marginals.
\begin{figure}
	\centering
	\includegraphics{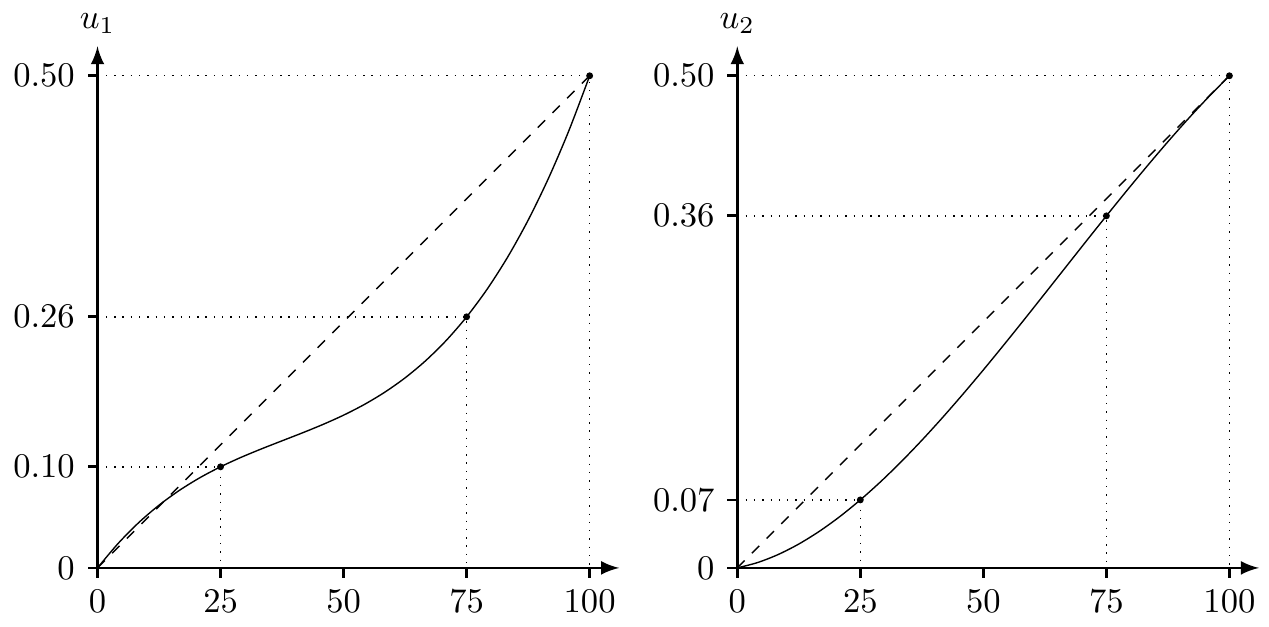}
	\caption{Example of \uta{} and \utap{} value functions.
	The dashed lines correspond to the \uta{} piecewise linear
	function and the plain lines correspond to polynomials of
	 degree 3.}
	\label{fig-example_utap_vs_uta}
\end{figure}

\begin{table}
\centering
\begin{tabular}{rrrrr}
	\toprule
	& $a$ & $b$ & $c$ & $d$\\
	\midrule
	\uta{} score & 0.5 & 0.5 & 0.5 & 0.5\\
	\utap{} score & 0.5 & 0.5 & 0.46 & 0.33\\
	\bottomrule
\end{tabular}
\caption{\uta{} and \utap{} scores of the alternatives described in Table
\ref{table-example_utap_flexibility} with the \uta{} and \utap{} marginals
represented in Figure \ref{fig-example_utap_vs_uta}.}
\label{table-scores_uta_utap}
\end{table}

In case polynomials are allowed for, instead of piecewise linear
functions, to model the marginals, the DM's preferences can be accurately represented.
Figure \ref{fig-example_utap_vs_uta} shows the case of polynomials of degree 3 used as
marginals (plain line). The scores of the alternatives computed with these marginals are displayed in Table \ref{table-scores_uta_utap}. They comply with  the
\dm{}'s preferences.

Obviously it would have been possible to reproduce the \dm{}'s ranking using more than one linear piece marginals in an \uta{} model.
However, when the breakpoints are fixed in advance, it is easy to construct an example, similar to the above one, in which the \dm{}'s ranking cannot be reproduced using a linear function between successive breakpoints while a polynomial spline will do.

The two methods introduced below, \utap{} in the rest of this section and \utasp{} in Section \ref{sec:utaspline}, replace the piecewise linear marginals of \uta{} by polynomials and polyomial splines, respectively.

\subsection{Basic facts about non-negative polynomials}
\label{subsection-semidefinite_programming}

In the last few years, significant improvements have been made in formulating and solving optimization problems in which
constraints are expressed in the form of polynomial (in)equalities and
with a polynomial objective function; see, e.g., \cite{gloptipoly, gloptipoly3}.
These new techniques are useful for various applications; see~\cite{Las09} and the references therein.
A problem arising in many applications, including the present one, is to guarantee the
non-negativity of functions of several variables. In our case, we have to make sure not only that marginals are non negative but also that they are nondecreasing, i.e. that their derivative is non-negative.
Testing the non-negativity of a polynomial of several variables and of a degree equal to or greater
than 4 is NP-hard \cite{murtykabadi1987}.
In \cite{parillo2003}, an approach based on convex optimization techniques
has been proposed in order to find an approximate solution to this
problem.

The approach proposed in \cite{parillo2003} is based on the following
theorem about non-negative polynomials.
\begin{theorem}[Hilbert]
A polynomial $F: \mathbb{R}^n \to \mathbb{R}$ is non-negative if it is
possible to decompose it as a sum of squares (SOS):
\begin{equation}
	F(z) = \sum_s f_s^2(z) \qquad
		\text{with } z \in \mathbb{R}^n.
	\label{eq-poly_sos}
\end{equation}
\end{theorem}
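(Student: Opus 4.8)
The plan is to prove the stated implication directly from the definition of a sum of squares, because the direction asserted here---that an \sos{} decomposition \emph{guarantees} non-negativity---is the elementary one. I would fix an arbitrary point $z \in \mathbb{R}^n$ and evaluate the right-hand side of \eqref{eq-poly_sos} at $z$. Each $f_s(z)$ is then a real number, so its square $f_s^2(z)$ is non-negative; a finite sum of non-negative reals is non-negative, whence $F(z) = \sum_s f_s^2(z) \geq 0$. Since $z$ was arbitrary, $F$ is non-negative on all of $\mathbb{R}^n$, which is exactly the claim. No induction, no degree bookkeeping, and no restriction on the number of variables is needed.

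Accordingly, I do not expect any genuine obstacle in the direction that is stated: the whole argument is a one-line invocation of the fact that squares of reals are non-negative. The subtle content historically attached to Hilbert's name lies instead in the \emph{converse} question---whether every non-negative polynomial admits an \sos{} decomposition---which is false in general (the classical Motzkin polynomial being non-negative yet not a sum of squares). That converse is deliberately \emph{not} claimed here, and this is precisely what makes the \sos{} viewpoint useful for the paper's purposes: it furnishes a sufficient, and computationally tractable (via \sdp{}), certificate of non-negativity, which is all that is required to encode the monotonicity and non-negativity constraints of the marginals.

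The only point worth flagging for the sequel is that this trivial sufficiency must be invoked once per polynomial whose sign we wish to control---both for each marginal and for each of its derivatives---so the certificate is applied termwise rather than globally. It is also worth recalling, as context for the exactness of the approach, that for univariate polynomials the converse does in fact hold, so in the single-criterion setting the \sos{} reformulation loses nothing; in the multivariate case it remains a sound but possibly conservative relaxation, consistent with the NP-hardness of the general non-negativity test noted above.
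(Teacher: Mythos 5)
Your proof is correct: fixing an arbitrary $z$, noting that each $f_s^2(z)\geq 0$, and summing is all that is required, and no more can be said for this direction. The paper itself states this theorem \emph{without} proof---it is the elementary, sufficient direction of the SOS criterion---and reserves its written argument for the following theorem (the univariate converse), so your one-line treatment coincides exactly with what the paper leaves implicit, and your remarks on the failure of the converse in several variables and its validity in one variable match the paper's surrounding discussion.
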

The condition given above is sufficient but not necessary, there exist
non-negative polynomials that cannot be decomposed as a sum of squares
\cite{blekherman2006}.
However, it has been proved by Hilbert that a non-negative polynomial
of one variable is always a sum of squares \cite{parillo2003}.
We give the proof here because it is remarkably simple and elegant.
\begin{theorem}[Hilbert]
	A non-negative polynomial in one variable is always a SOS.
\end{theorem}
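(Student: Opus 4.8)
The plan is to prove the stronger statement that a non-negative univariate polynomial is in fact a sum of only \emph{two} squares, from which the \sos{} property of the theorem is immediate. The whole argument rests on factoring the polynomial over $\mathbb{C}$ and then regrouping its roots. First I would record two preliminary observations about a polynomial $p$ satisfying $p(x) \geq 0$ for all $x \in \mathbb{R}$: its degree must be even (an odd-degree polynomial tends to $-\infty$ at one end), and its leading coefficient $c$ must be positive (otherwise $p(x) \to -\infty$ as $|x| \to \infty$). Hence $\sqrt{c}$ is a well-defined real number, a fact I will need at the very end.

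Next I would invoke the fundamental theorem of algebra to factor $p$ completely over $\mathbb{C}$. Two structural facts about its roots are the crux. Because $p$ has real coefficients, its non-real roots occur in conjugate pairs $\beta,\bar\beta$. The key step --- and the one I expect to be the only genuine obstacle --- is to show that every real root $\alpha$ has \emph{even} multiplicity: if $\alpha$ had odd multiplicity, then $p(x) = (x-\alpha)^{\mathrm{odd}}\, q(x)$ with $q(\alpha) \neq 0$, so $p$ would change sign as $x$ crosses $\alpha$, contradicting $p \geq 0$. Granting this, the factorization takes the form
\begin{equation*}
p(x) = c \prod_j (x - \alpha_j)^{2 m_j} \prod_k \bigl[(x - \beta_k)(x - \bar\beta_k)\bigr]^{n_k},
\end{equation*}
with every $\alpha_j \in \mathbb{R}$ and the $\beta_k$ chosen, say, in the open upper half-plane.

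Finally I would collect ``half'' of this factorization into a single polynomial with complex coefficients,
\begin{equation*}
g(x) = \sqrt{c}\, \prod_j (x - \alpha_j)^{m_j} \prod_k (x - \beta_k)^{n_k},
\end{equation*}
and write $g(x) = u(x) + i\,v(x)$ with $u,v$ real polynomials (its real and imaginary parts). Denoting by $g^*$ the polynomial obtained by conjugating the coefficients of $g$, one checks the polynomial identity $g(x)\,g^*(x) = p(x)$ by matching the two factorizations, while for real $x$ the value $g^*(x)$ equals the complex conjugate of $g(x)$, so that $g(x)\,g^*(x) = (u(x)+i\,v(x))(u(x)-i\,v(x)) = u(x)^2 + v(x)^2$. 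Therefore $p = u^2 + v^2$, exhibiting $p$ as a sum of two squares and \emph{a fortiori} as a \sos{}. As an alternative to the real/imaginary split, one could note the elementary identity $\bigl(\sum_s f_s^2\bigr)\bigl(\sum_t g_t^2\bigr) = \sum_{s,t}(f_s g_t)^2$, which shows that a product of sums of squares is again a sum of squares, and apply it to the square factors $(x-\alpha_j)^2$ and the two-square factors $(x-a_k)^2 + b_k^2$ coming from the conjugate pairs; but the two-square route above is the shortest.
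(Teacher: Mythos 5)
Your proposal is correct and follows essentially the same route as the paper's proof: even degree and positive leading coefficient, factorization over $\mathbb{C}$, the sign-change argument showing real roots have even multiplicity, and then splitting the factorization into a ``half'' polynomial $g$ and its conjugate so that $p = u^2 + v^2$ with $u,v$ the real and imaginary parts of $g$. The only cosmetic difference is that your evenness argument is local at each real root, while the paper argues on the interval between the two largest odd-multiplicity roots; both are valid instances of the same idea.
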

\begin{proof}
Consider a polynomial of degree $D$, $p(x) = p_0 + p_1 x + p_2 x^2 +
\ldots + p_D x^D$. Since $p(x)$ is non-negative, $D$ must be even.
The value of $p_D$ should be greater than 0, otherwise $\lim_{x \to
\infty} p(x) = - \infty$.
As every polynomial of degree $D$ admits $D$ roots, one can write $p(x)$ as
follows: $$p(x) = p_D \prod_{i = 1}^m (x - z_i) (x - \bar{z}_i)
\prod_{j = 1}^n (x - t_j)^{\alpha_j}$$ in which $z_i$ and
$\bar{z}_i$ for $i = \{1, \ldots, m\}$ are pairs of conjugate complex
numbers and $t_j$ for $j = \{1, \ldots, n\}$ are distinct real numbers where $D =
2m + \sum_{j=1}^n \alpha_j$.
All the values of the exponents $\alpha_j$ are even.
Indeed, consider a subset of $k$ indices, $\{\Delta_1, \ldots, \Delta_k\}$,
such that $\alpha_{\Delta_1}, \ldots, \alpha_{\Delta_k}$ are odd.
Let $\tau$ be a permutation of these indices such that $t_{\tau(\Delta_1)}
< \ldots < t_{\tau(\Delta_k)}$.
For $x \in \left]t_{\tau(\Delta_{k-1})},
t_{\tau(\Delta_k)}\right[$, we would have $\prod_{j=1}^n (x -
t_j)^{\alpha_j} < 0$, a contradiction.
As all the value $\alpha_j$ are even, we can rewrite $p(x)$ as follows:
$$p(x) = \left(\sqrt{p_D} \prod_{i = 1}^l (x - z_i) \right)
\left(\sqrt{p_D} \prod_{i = 1}^l (x - \bar{z}_i) \right)$$
in which some pairs $(z_i$, $\bar{z}_i)$ have no imaginary part.
Let $\left(\sqrt{p_D} \prod_{i = 1}^l (x - z_i) \right) = q(x) + i r(x)$
and $\left(\sqrt{p_D} \prod_{i = 1}^l (x - \bar{z}_i) \right) = q(x) - i
r(x)$ where $i$ is the imaginary part of the complex number and $q(x),
r(x)$, two polynomials with real coefficients.
Finally, the product of these two terms gives a sum of two squares:
$p(x) = \left[q(x)\right]^2 + \left[r(x)\right]^2$.
\end{proof}

Let us consider the problem of determining a non-negative polynomial $p$ of
one variable $x$ and degree $D$.
We use the following canonical form to represent this polynomial:
\begin{align}
	p(x)
	& = p_0 + p_1 x + p_2 x^2 + \ldots + p_D x^D \label{eq-poly_cano}\\
	& = \sum_{i = 0}^D p_i \cdot x^i. \nonumber
\end{align}
To guarantee the non-negativity of this polynomial, we have to ensure that it
can be represented as a sum of squares like in Equation \eqref{eq-poly_sos}.
Note that a non-negative polynomial will always have an even degree since
either the limit at positive or negative infinity of a polynomial of odd
degree is negative.
Let $d = \frac{D}{2}$, the polynomial $p(x)$ reads:
\begin{align*}
	p(x) = \sum_s q_s^2(x)
	= \sum_s \left[ \sum_{i = 0}^d b_s^i x^i \right]^2.
\end{align*}
Defining $b_s^\transp = \begin{pmatrix} b_s^0 & b_s^1 & \ldots & b_s^d
\end{pmatrix}$ and
$\overline{x}^T = \begin{pmatrix} 1 & x & \ldots & x^d \end{pmatrix}$ (where $\transp$ stands for the matrix transposition operation), we can express $p(x)$
as follows:
\begin{align*}
	p(x)
	& = \sum_s \left( b_s^\transp \overline{x} \right)^2
	= \sum_s \overline{x}^\transp b_s  b_s^\transp
		\overline{x} \nonumber
	= \overline{x}^\transp \left[\sum_s b_s b_s^\transp \right]
		\overline{x} \nonumber
			= \overline{x}^\transp Q \overline{x} \nonumber\\
	& =
	\begin{pmatrix}
		1\\
		x\\
		\vdots\\
		x^d
	\end{pmatrix}^\transp
	\begin{pmatrix}
	q_{0,0} & q_{0,1} & \cdots & q_{0,d}\\
	q_{1,0} & q_{1,1} & \cdots & q_{1,d}\\
	\vdots & \vdots & \ddots & \vdots\\
	q_{d,0} & q_{d,1} & \cdots & q_{d,d}\\
	\end{pmatrix}
	\begin{pmatrix}
		1\\
		x\\
		\vdots\\
		x^d
	\end{pmatrix}.
\end{align*}
Note that the matrix $Q = \sum_s b_s b_s^\transp$ is symmetric and positive semidefinite (PSD),
which we denote $Q \succeq 0$,
since $\overline{x}^\transp Q \overline{x} =  \sum_s \left( b_s^\transp \overline{x} \right)^2 \geq 0$ for all $\overline{x} \in \mathbb{R}^{d+1}$.
Therefore, to ensure that $p(x)$ is non-negative, it is necessary to find a matrix
$Q$ of dimension $(d + 1) \times (d + 1)$ such that $p(x) =
\overline{x}^\transp Q \overline{x}$ and $Q \succeq 0$.
It turns out that this condition is also sufficient. This follows from the following lemma.
\begin{lemma}
	$Q \succeq 0 \iff \exists H : Q = H \cdot H^\transp$.
\end{lemma}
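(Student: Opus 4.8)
The lemma asserts the equivalence between positive semidefiniteness of a symmetric matrix $Q$ and the existence of a factorization $Q = H \cdot H^\transp$. This is a standard linear algebra fact, so the plan is to prove both implications directly without invoking heavy machinery.

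For the direction $Q = H \cdot H^\transp \Rightarrow Q \succeq 0$, I would argue directly from the definition of positive semidefiniteness. For any vector $y \in \mathbb{R}^{d+1}$, one computes $y^\transp Q y = y^\transp H H^\transp y = (H^\transp y)^\transp (H^\transp y) = \| H^\transp y \|^2 \geq 0$, which is exactly the condition $Q \succeq 0$. This direction is immediate and requires no further justification.

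For the converse $Q \succeq 0 \Rightarrow \exists H : Q = H \cdot H^\transp$, the plan is to exploit the spectral theorem. Since $Q$ is symmetric, it admits an orthogonal diagonalization $Q = P D P^\transp$, where $P$ is orthogonal and $D = \mathrm{diag}(\lambda_1, \ldots, \lambda_{d+1})$ collects the eigenvalues. The positive semidefiniteness of $Q$ guarantees that all eigenvalues $\lambda_i$ are non-negative, so their square roots are real; one then sets $D^{1/2} = \mathrm{diag}(\sqrt{\lambda_1}, \ldots, \sqrt{\lambda_{d+1}})$ and takes $H = P D^{1/2}$. A direct computation gives $H \cdot H^\transp = P D^{1/2} (D^{1/2})^\transp P^\transp = P D P^\transp = Q$, as required. (Alternatively, one could invoke a Cholesky-type factorization, but the spectral argument is cleaner and makes the role of non-negativity of the eigenvalues transparent.)

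The only point demanding slight care—and the closest thing to an obstacle—is justifying that $Q \succeq 0$ forces each eigenvalue to be non-negative: if $v_i$ is a unit eigenvector for $\lambda_i$, then $\lambda_i = v_i^\transp Q v_i \geq 0$ by the defining inequality of positive semidefiniteness. Everything else is routine. Once this factorization is established, the lemma closes the argument begun before its statement, since it shows that writing $p(x) = \overline{x}^\transp Q \overline{x}$ with $Q \succeq 0$ is equivalent to exhibiting a genuine sum-of-squares decomposition: the columns of $H$ supply the vectors $b_s$, and hence the polynomials $q_s(x) = b_s^\transp \overline{x}$, whose squares sum to $p(x)$.
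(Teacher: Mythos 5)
Your proof is correct, but it takes a different route from the paper. The paper does not spell out a self-contained argument: the forward direction ($Q = H H^\transp \Rightarrow Q \succeq 0$) is implicit in the computation $\overline{x}^\transp Q \overline{x} = \sum_s \bigl( b_s^\transp \overline{x} \bigr)^2 \geq 0$ done just before the lemma, and for the converse the paper simply appeals to the Cholesky factorization, whose explicit recursive construction of a lower-triangular factor $L$ with $Q = L L^\transp$ is given in its appendix. You instead prove the converse via the spectral theorem: diagonalize $Q = P D P^\transp$, use $\lambda_i = v_i^\transp Q v_i \geq 0$ to take real square roots, and set $H = P D^{1/2}$. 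Each approach has its merits. The Cholesky route is constructive and yields a triangular factor, which is what numerical solvers actually compute; however, the recursive formulas in the paper's appendix involve division by diagonal entries and, as written, only go through for positive \emph{definite} matrices --- for singular PSD matrices the algorithm needs pivoting or a limiting argument. Your spectral argument handles the general (possibly singular) PSD case uniformly and makes the role of eigenvalue non-negativity explicit, at the cost of invoking the spectral theorem rather than an elementary elimination procedure. Your closing remark, that the columns of $H$ supply the vectors $b_s$ and hence the sum-of-squares decomposition of $p(x)$, matches exactly how the paper uses the lemma.
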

The above decomposition is called the Cholesky decomposition of matrix
$Q$; see \ref{section-cholesky}.
To summarize, \emph{a polynomial $p(x)$ in one variable is non-negative if
and only if there exists $Q \succeq 0$ such that $p(x) =
\overline{x}^\transp Q \overline{x}$}.

The coefficients of the polynomial expressed in its canonical form
\eqref{eq-poly_cano} are obtained by summing the
off-diagonal entries of the matrix $Q$, as follows:
\begin{equation*}
\begin{cases}
	p_0 = q_{0,0},\\
	p_1 = q_{1,0} + q_{0,1},\\
	p_2 = q_{2,0} + q_{1,1} + q_{0,2},\\
	\vdots\\
	p_{2d - 1} = q_{d,d-1} + q_{d-1,d},\\
	p_{2d} = q_{d,d}.
\end{cases}
\end{equation*}
We can express the value of the coefficients of the polynomial as follows:
\begin{equation}
	p_i =
	\begin{cases}
		\sum_{g = 0}^i q_{g,i-g} & i = \{0, \ldots, d\},\\
		\sum_{g = i - d}^d q_{g,i-g} & i = \{d, \ldots, 2d\}.
	\end{cases}
\end{equation}
The value of $p_d$ can be computed with both expressions.
Finding a non-negative univariate polynomial consists in finding a
semidefinite positive matrix $Q$.
Summing the off-diagonal entries of this matrix allows to control the
coefficients of the polynomial;

In some applications, it is not necessary to ensure the non-negativity of the
polynomial on $\mathbb{R}$ but only in an interval $[v_1, v_2]$.
If the non-negativity constraint has to be guaranteed only in a given
interval $[v_1, v_2]$ for a polynomial $p(x)$, then the following theorem
holds.
\begin{theorem}[Hilbert]
\label{theorem-sos_interval}
A polynomial $p(x)$ in one variable $x$ is non-negative in the interval
$[v_1, v_2]$, if and only if
$p(x) = (x - v_1) \cdot q(x) + (v_2 - x) \cdot r(x)$ where $q(x)$ and
$r(x)$ are SOS.
\end{theorem}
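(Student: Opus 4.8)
The plan is to prove the two implications separately, the forward one being immediate and the reverse one (that non-negativity forces the representation) requiring an induction on the degree.

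For the easy direction, suppose $p(x) = (x-v_1)\, q(x) + (v_2 - x)\, r(x)$ with $q,r$ both \sos{}. On the interval $[v_1,v_2]$ we have $x - v_1 \geq 0$ and $v_2 - x \geq 0$, while $q(x),r(x) \geq 0$ because a sum of squares is non-negative everywhere. Hence each of the two products is non-negative and $p(x) \geq 0$ throughout $[v_1,v_2]$; no further work is needed. For the converse I would fix the notation $\ell_1(x) = x - v_1$, $\ell_2(x) = v_2 - x$ and work with the cone
\[
\mathcal{C} = \{\, \ell_1 q + \ell_2 r : q,r \text{ are \sos{}} \,\},
\]
whose members are exactly the polynomials of the desired shape, so that the goal becomes: every $p$ non-negative on $[v_1,v_2]$ lies in $\mathcal{C}$.

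Before the induction I would record three closure properties of $\mathcal{C}$ that do all the work. First, every \sos{} polynomial $s$ belongs to $\mathcal{C}$, since the identity $\ell_1 + \ell_2 = v_2 - v_1$ gives $s = \ell_1 \tfrac{s}{v_2-v_1} + \ell_2 \tfrac{s}{v_2-v_1}$ with $\tfrac{s}{v_2-v_1}$ again \sos{}. Second, $\mathcal{C}$ is stable under addition and under multiplication by an \sos{} polynomial, because a product of sums of squares is a sum of squares. Third, and crucially, $\mathcal{C}$ is stable under multiplication by $\ell_1$ and by $\ell_2$: expanding $\ell_1(\ell_1 q + \ell_2 r) = \ell_1^2 q + \ell_1\ell_2 r$, the term $\ell_1^2 q$ is a square times an \sos{}, hence \sos{} and in $\mathcal{C}$, while the cross term is rewritten, again via $\ell_1+\ell_2 = v_2 - v_1$, as $\ell_1\ell_2 r = \ell_1 \tfrac{\ell_2^2 r}{v_2-v_1} + \ell_2 \tfrac{\ell_1^2 r}{v_2-v_1} \in \mathcal{C}$. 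With these in hand I would run an induction on $\deg p$, peeling off one factor at a time. If $p$ has a non-real root it has the conjugate root too, and the associated quadratic factor is a sum of two squares (exactly as in the proof of the preceding theorem); dividing it out leaves a polynomial of degree $\deg p - 2$ still non-negative on $[v_1,v_2]$, and multiplication by this \sos{} factor keeps us in $\mathcal{C}$. If all roots are real I would pick one root $t$: a root in the open interval $(v_1,v_2)$ must have even multiplicity (otherwise $p$ would change sign there), so a factor $(x-t)^2$ can be removed; a root with $t \leq v_1$ lets me divide by $x - t = \ell_1 + (v_1 - t)$ with $v_1 - t \geq 0$, the quotient remaining non-negative on the interval; symmetrically a root with $t \geq v_2$ uses $t - x = \ell_2 + (t - v_2)$. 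In every case the quotient has strictly smaller degree and is non-negative on $[v_1,v_2]$, so it lies in $\mathcal{C}$ by the induction hypothesis, and the closure properties show that reintroducing the peeled factor returns an element of $\mathcal{C}$; the base case $\deg p = 0$ is a non-negative constant, which is \sos{}.

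The main obstacle, and the one point that needs genuine care, is that peeling a single linear factor $(x - t)$ off a member of $\mathcal{C}$ does not obviously return a member of $\mathcal{C}$, since multiplying an \sos{} polynomial by a degree-one factor destroys the \sos{} property. The device that resolves this is precisely the third closure property above: the constant identity $\ell_1 + \ell_2 = v_2 - v_1$ lets one redistribute the offending cross term $\ell_1 \ell_2 r$ back into the cone. I would therefore present these closure lemmas first and state them cleanly, so that the induction itself collapses to a short case analysis on the location of a single root.
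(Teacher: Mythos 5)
Your proof is correct, but there is nothing in the paper to compare it against: the paper states Theorem~\ref{theorem-sos_interval} without any proof, proving only the preceding theorem (a polynomial non-negative on all of $\mathbb{R}$ is a sum of squares). Your argument is therefore a genuine addition rather than an alternative route, and it checks out. The forward direction is immediate, as you say. For the converse, the three closure properties of the cone $\mathcal{C}$ are all correctly verified: the identity $\ell_1+\ell_2=v_2-v_1$ (which requires $v_1<v_2$, implicit in the paper's setting) both embeds the SOS polynomials into $\mathcal{C}$ and redistributes the cross term $\ell_1\ell_2 r$, using the standard facts that products and positive scalar multiples of SOS polynomials are SOS; and the case analysis in the induction is exhaustive. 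It is worth noting that your non-real-root case reuses exactly the conjugate-pair factorization and the even-multiplicity argument from the paper's proof of the preceding theorem, so your proof is a natural extension of the paper's own technique, with the cone machinery being precisely what the interval constraint adds. Two details you gloss over are routine but deserve a sentence each: when the peeled real root sits at an endpoint ($t=v_1$ or $t=v_2$), the non-negativity of the quotient at that endpoint follows by continuity rather than by division; and in the case $t\geq v_2$ the quotient must be taken with respect to the factor $t-x$ (not $x-t$) so that it is non-negative on the interval --- your phrasing handles this implicitly, but the sign flip should be made explicit. Finally, what you prove is the classical Markov--Luk\'acs representation; the sharper classical statements come with degree bounds on $q$ and $r$, which your construction does not deliver (the cross-term redistribution inflates degrees), but the paper's statement imposes no such bounds, so nothing is lost here.
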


Given the above theorem, if we want to ensure the non-negativity of the
polynomial $p(x)$ of degree $D$ on the interval $[v_1, v_2]$, we have to
find two matrices $Q$ and $R$ of size $d + 1$, with $d = \left\lfloor
\frac{D}{2} \right\rfloor$, that are positive semidefinite.
We denote these matrices and their indices as follows:
\begin{equation*}
	Q =
	\begin{pmatrix}
	q_{0,0} & q_{0,1} & \cdots & q_{0,d}\\
	q_{1,0} & q_{1,1} & \cdots & q_{1,d}\\
	\vdots & \vdots & \ddots & \vdots\\
	q_{d,0} & q_{d,1} & \cdots & q_{d,d}\\
	\end{pmatrix},\quad
	R =
	\begin{pmatrix}
	r_{0,0} & r_{0,1} & \cdots & r_{0,d}\\
	r_{1,0} & r_{1,1} & \cdots & r_{1,d}\\
	\vdots & \vdots & \ddots & \vdots\\
	r_{d,0} & r_{d,1} & \cdots & r_{d,d}\\
	\end{pmatrix}.
\end{equation*}
Since $Q$ and $R$ are positive semidefinite, the products
$\overline{a_j}^\transp Q \overline{a_j}$ and $\overline{a_j}^\transp R
\overline{a_j}$, with $\overline{a_j}^\transp = \begin{pmatrix} 1 & a_j &
\ldots & a_j^d\end{pmatrix}$, are always non-negative.

To obtain a polynomial $p(x)$ that is non-negative in the interval $[v_1,
v_2]$, its coefficients have to be chosen such that:
\begin{equation*}
\begin{cases}
	p_0 = v_2 \cdot r_{0,0} - v_1 \cdot q_{0,0},\\
	p_1 = q_{0,0} - r_{0,0}
		+ v_2 \cdot (r_{1,0} + r_{0,1})
		- v_1 \cdot (q_{1,0} - q_{0,1}),\\
	p_2 = (q_{1,0} + q_{0,1}) - (r_{1,0} + r_{0,1})
		+ v_2 \cdot (r_{2,0} + r_{1,1} + r_{0,2})\\
	\qquad \qquad - v_1 \cdot (q_{2,0} + q_{1,1} + q_{0,2}),\\
	\vdots\\
	p_{2d-1} = (q_{d,d-2} + q_{d-1,d-1} + q_{d-2,d})
		- (r_{d,d-2} + r_{d-1,d-1} + r_{d-2,d})\\
	\qquad \qquad + v_2 \cdot (r_{d,d-1} + r_{d-1,d})
		- v_1 \cdot (q_{d,d-1} + q_{d-1,d}),\\
	p_{2d} = (q_{d,d-1} + q_{d-1,d}) - (r_{d,d-1} + r_{d-1,d})
		+ v_2 \cdot r_{d,d} - v_1 \cdot q_{d,d},\\
	p_{2d+1} = q_{d,d} -  r_{d,d}.
\end{cases}
\end{equation*}
If the degree $D$ of the polynomial $p(x)$ is even then the value of
$p_{2d+1}$ is equal to 0.
The values $p_i$ can be expressed in the following more compact form:
\begin{equation*}
	p_i =
	\begin{cases}
	v_2 \cdot r_{0,0} - v_1 \cdot q_{0,0} & i = 0,\\
	\sum_{g = 0}^{i-1} (q_{g,i-1-g} - r_{g, i-1-g})\\
		\qquad + \sum_{g = 0}^i (v_2 \cdot r_{g, i-g}
			- v_1 \cdot q_{g,i-g})
		& i = \{1, \ldots, d\},\\
	\sum_{g = i - d - 1}^d (q_{g,i-1-g} - r_{g,i-1-g})\\
		\qquad + \sum_{g=i-d}^d (v_2 \cdot r_{g,i-g}
			- v_1 \cdot q_{g,i-g})
		& i = \{d+1, \ldots, 2d\},\\
	q_{d,d} - r_{d,d} & i = 2d + 1.
	\end{cases}
\end{equation*}

\subsection{Semidefinite programming applied to \uta{} methods}

In the perspective of building more natural marginal value functions, we
use semidefinite programming (SDP) to learn polynomial marginals
 instead of piecewise linear ones. SDP has become a
standard tool in convex optimization, being a generalization of linear
programming and second-order cone programming. It allows to optimize
linear functions over an affine subspace of the set of positive
semidefinite matrices; see, e.g., \cite{vdb1996sdp} and the references
therein.

There are two variants of the new \utap{} method.
Firstly, we describe the approach that consists in using polynomials that
are overall monotone, i.e.\@ monotone on the set of all real numbers.
Then we describe the second approach considering polynomials that are
monotone only on a given interval.

\subsubsection{Enforcing monotonicity of the marginals on the set of real numbers}

In the new proposed model, we define the value function on each
criterion $j$ as a polynomial of degree $D_j$:
\begin{equation}
	u_j^*(a_j) = \sum_{i = 0}^{D_j} p_{j,i} \cdot a_j^i.
	\label{eq-utilities_polynomials}
\end{equation}
To be compliant with the requirements of the theory of additive value functions, the polynomials used as marginals should be non-negative and monotone over the criteria domains.
To ensure monotonicity, the derivative of the marginal value function has to be
non-negative, hence we impose that the derivative of each value
function is a sum of squares.
The degree of the derivative is therefore even which implies that $D_j$ is
odd.
This requirement reads:
\begin{align*}
	u_j^*{'}
	& = p_{j,1} + 2 p_{j,2} \cdot a_j + 3 p_{j,3} \cdot a_j^2 + ...
		+ D_j p_{j,D_j} \cdot a_j^{D_j-1}\\
	& = \overline{a_j}^\transp Q_j \overline{a_j},
\end{align*}
with $Q_j$ a PSD matrix of dimension $(d_j + 1) \times (d_j + 1)$,
$\overline{a_j}$ a vector of size $(d_j + 1)$ with $d_j =
\frac{D_j - 1}{2}$:
\begin{equation*}
	Q_j =
	\begin{pmatrix}
	q_{j,0,0} & q_{j,0,1} & \cdots & q_{j,0,d_j}\\
	q_{j,1,0} & q_{j,1,1} & \cdots & q_{j,1,d_j}\\
	\vdots & \vdots & \ddots & \vdots\\
	q_{j,d_j,0} & q_{j,d_j,1} & \cdots & q_{j,d_j,d_j}\\
	\end{pmatrix},
	\quad
	\overline{a_j} =
	\begin{pmatrix}
	1\\
	a_j\\
	\vdots\\
	a_j^{d_j}
	\end{pmatrix}.
\end{equation*}

By using SDP, we impose the matrix $Q$ to be semidefinite
positive and we set the following constraints on the $p_{j,i}$ values, for
$i \geq 1$:
\begin{equation*}
\begin{cases}
	p_{j,1} = q_{j,0,0},\\
	2 p_{j,2} = q_{j,1,0} + q_{j,0,1},\\
	3 p_{j,3} = q_{j,2,0} + q_{j,1,1} + q_{j,0,2},\\
	\vdots\\
	(2d_j) p_{j,2d_j} = q_{j,d_j,d_j-1} + q_{j,d_j-1,d_j},\\
	(2d_j + 1) p_{j,2d_j+1} = q_{j,d_j,d_j}.
\end{cases}
\end{equation*}

In \utap{}, the marginal value functions and monotonicity conditions on
marginals given in Equation \eqref{eq-uta_constraints} and
\eqref{eq-utadis_constraints} are replaced by the following constraints:
\begin{equation}
\left\{
\begin{array}{rclr}
	U(a) & = & \sum_{j = 0}^n \sum_{i = 0}^{D_j} p_{j,i} \cdot a_j^i
		& \forall a \in A,\\
	Q_j & \multicolumn{2}{l}{\text{PSD}} & \forall j \in N,\\
	(i + 1) p_{j,i+1} & = & \sum_{g = 0}^i q_{j,g,i-g}
		& i = \{ 0, \ldots, d_j \}, \forall j \in N,\\
	(i + 1) p_{j,i+1} & = & \sum_{g = i-d_j}^{d_j} q_{j,g,i-g}
		& i = \{ d_j+1, \ldots, 2 d_j \}, \forall j \in N.
\end{array}
\right.
\label{eq-utap_constraints_monotonicity}
\end{equation}

The optimization program composed of the objective given in Equation \eqref{eq-uta_objective} and
the set of constraints given in Equations \eqref{eq-uta_constraints} and
\eqref{eq-utap_constraints_monotonicity} can be solved using convex
programming, more precisely, semidefinite programming \cite{parillo2003}.
We refer to this new mathematical program as to \utap{}.
An explicit \utap{} formulation for a simple problem involving 2 criteria and 3 alternatives is provided in
\ref{appendix-example_sdp} for illustrative purposes.

\subsubsection{Enforcing monotonicity of the marginals on the criteria domains}

Ensuring the monotonicity of each marginal on the
domain of each criterion (instead of the whole real line) is sufficient to satisfy the requirements of the additive value function
model. To do so, we use Theorem \ref{theorem-sos_interval} and only
impose  the non-negativity of the marginal derivative on the domain $[v_{1,j}, v_{2,j}]$
of each criterion. This results in the following condition on the derivative $u_j^*{'}$ of the polynomial $u_j^*$, for all $j$:
\begin{align*}
	u_j^*{'}
	& = p_{j,1} + 2 p_{j,2} \cdot a_j + 3 p_{j,3} \cdot a_j^2 + ...
		+ D_j p_{j,D_j} \cdot a_j^{D_j-1}\\
	& = (a_j - v_{1,j}) \overline{a_j}^\transp Q_j \overline{a_j}
		+ (v_{2,j} - a_j) \overline{a_j}^\transp R_j \overline{a_j}.
\end{align*}
In the above equation, $Q_j$ and $R_j$ are two PSD matrices of size
$(d_j + 1) \times (d_j + 1)$ and $\overline{a_j}$ a vector of size $d_j +
1$, where $d_j = \left\lfloor \frac{D_j - 1}{2} \right\rfloor$:
\begin{equation*}
	Q_j =
	\begin{pmatrix}
	q_{j,0,0} & q_{j,0,1} & \cdots & q_{j,0,d_j}\\
	q_{j,1,0} & q_{j,1,1} & \cdots & q_{j,1,d_j}\\
	\vdots & \vdots & \ddots & \vdots\\
	q_{j,d_j,0} & q_{j,d_j,1} & \cdots & q_{j,d_j,d_j}\\
	\end{pmatrix},
	\quad
	R_j =
	\begin{pmatrix}
	r_{j,0,0} & r_{j,0,1} & \cdots & r_{j,0,d_j}\\
	r_{j,1,0} & r_{j,1,1} & \cdots & r_{j,1,d_j}\\
	\vdots & \vdots & \ddots & \vdots\\
	r_{j,d_j,0} & r_{j,d_j,1} & \cdots & r_{j,d_j,d_j}\\
	\end{pmatrix}.
\end{equation*}
The value $p_{j,i}$ for $i \geq 1$ are obtained as follows:
\begin{equation*}
\begin{cases}
	p_{j,1} = v_{2,j} \cdot r_{j,0,0} - v_{1,j} \cdot q_{j,0,0},\\
	2p_{j,2} = q_{j,0,0} - r_{j,0,0}
		+ v_{2,j} \cdot (r_{j,1,0} + r_{j,0,1})
		- v_{1,j} \cdot (q_{j,1,0} + q_{j,0,1}),\\
	3p_{j,3} = (q_{j,1,0} + q_{j,0,1}) - (r_{j,1,0} + r_{j,0,1})
		+ v_{2,j} \cdot (r_{j,2,0} + r_{j,1,1} + r_{j,0,2})\\
	\qquad \qquad \qquad - v_{1,j} \cdot (q_{j,2,0} + q_{j,1,1} + q_{j,0,2})\\
	\vdots\\
	(2d_j) p_{j,2d_j} = (q_{j,d_j,d_j-2} + q_{j,d_j-1,d_j-1} + q_{j,d_j-2,d_j})\\
	\qquad \qquad \qquad \qquad - (r_{j,d_j,d_j-2} + r_{j,d_j-1,d_j-1} + r_{j,d_j-2,d_j})\\
	\qquad \qquad \qquad \qquad + v_{2,j} \cdot (r_{j,d_j,d_j-1} + r_{d_j-1,d_j})
		- v_{1,j} \cdot (q_{j,d_j,d_j-1} + q_{j,d_j-1,d_j}),\\
	(2d_j+1)p_{j,2d_j+1} = (q_{j,d_j,d_j-1} + q_{j,d_j-1,d_j})
		- (r_{j,d_j,d_j-1} + r_{j,d_j-1,d_j})\\
	\qquad \qquad \qquad \qquad \qquad + v_{2,j} \cdot r_{j,d_j,d_j} -
		v_{1,j} \cdot q_{j,d_j,d_j},\\
	(2d_j+2)p_{j,2d_j+2} = q_{j,d_j,d_j} - r_{j,d_j,d_j}.
\end{cases}
\end{equation*}
If the degree $D_j$ is odd, then we have $p_{j,2d_j+2} = 0$ since $2d_j + 2 >
D_j$.

In convex programming, in order to have polynomial marginals that are
monotone on an interval, the monotonicity constraints in \uta{}
have to be replaced by the following ones:
\begin{equation}
\left\{
\begin{array}{rclr}
	U(a) & = & \sum_{j = 0}^n \sum_{i = 0}^{D_j} p_{j,i} \cdot a_j^i
		& \forall a \in A,\\
	Q_j, R_j & \multicolumn{2}{l}{\text{PSD}} & \forall j \in N,\\
	p_{j,1} & = & v_{2,j} \cdot r_{j,0,0} - v_{1,j} \cdot q_{j,0,0},\\
	(i + 1)p_{j,i+1} & =
		& \sum_{g = 0}^{i-1} (q_{j,g,i-g} - r_{j,g,i-g})\\
		& & \multicolumn{2}{l}{
		\quad + \sum_{g = 0}^i (v_{2,j} \cdot r_{j,g,i-1-g}
		- v_{1,j} \cdot q_{j,g,i-1-g})}\\
		& & \multicolumn{2}{r}{i = \{0, \ldots, d_j\}, \forall j \in N,}\\
	(i + 1) p_{j,i+1} & =
		& \multicolumn{2}{l}{
		\sum_{g = i - d_j - 1}^{d_j} (q_{j,g,i-1-g} - r_{j,g,i-1-g})}\\
		& & \multicolumn{2}{l}{
		\quad + \sum_{g = i-d_j}^{d_j} (v_{2,j} \cdot r_{j,g,i-g}
		- v_{1,j} q_{j,g, i-g})}\\
		& & \multicolumn{2}{r}{i = \{d_j+1, \ldots, 2d_j\}, \forall j \in N,}\\
	(2d_j + 2) p_{j,2d_j+2} & = & q_{d_j,d_j} - r_{d_j,d_j}
		& \qquad \qquad \qquad \quad \forall j \in N.
\end{array}
\right.
\label{eq-utap_constraints_monotonicity2}
\end{equation}

The optimization program composed of the objective given in Equation \eqref{eq-uta_objective} and
the set of constraints given in Equation \eqref{eq-uta_constraints} and
\eqref{eq-utap_constraints_monotonicity} can be solved using semidefinite
programming.


\section{\utasp{}: additive value functions with splines marginals}\label{sec:utaspline}

In this section we describe a variant of \utap{} which consists
in using several polynomials for each value function.
We first recall some theory about splines.
Then we describe the new method called \utasp{}.

\subsection{Splines}

We recall here the definition of a spline.
We detail the ones that are the most commonly used.

\subsubsection{Definition}

A spline of degree $D_s$ is a function $\mathit{Sp}$ that interpolates the
set of points $(x_i, y_i)$ for $i = 0, ..., q$, with
$x_0 < x_1 < \ldots < x_q$ such that:
\begin{itemize}
    \item $\mathit{Sp}(x_i) = y_i$ for $i = 0, \ldots, q$;

    \item $\mathit{Sp}$ is a set of polynomials of degree equal to or
    smaller than $D_s$, on each interval $[x_i, x_{i+1}[$ (at least one
    of the polynomials has a degree equal to $D_s$);

    \item the derivative of $\mathit{Sp}$ are continuous up to a given
    degree $D_c$ on $[x_0, x_q]$.
\end{itemize}

The degree of a spline corresponds to its highest polynomial degree.
If all the polynomials have the same degree, the spline is said to be
uniform.

The continuity of the spline at the connection points is ensured up to
a given derivative.
Usually, the continuity of the spline is guaranteed up to the second
derivative ($D_c = 2$).
It ensures the continuity of the slope and concavity at the connection
points.

\subsubsection{Cubic splines}

The most common uniform splines are the ones of degree 3 ($D_s = 3$), also
called cubic splines.
A cubic spline consists of a set of third degree polynomials which are
continuous up to the second derivative at their connection points.

We denote by $s_i$ the $i^{\text{th}}$ polynomial of the spline going from
connection point $x_{i}$ to connection point $x_{i+1}$.
Formally, each polynomial $s_i$ of the spline has the following form:
\begin{equation*}
    s_i(x) = s_{i,0} + s_{i,1} x + s_{i,2} x^2 + s_{i,3} x^3.
\end{equation*}
The use of cubic splines requires the determination of four parameters:
$s_{i,0}$, $s_{i,1}$, $s_{i,2}$ and $s_{i,3}$.
If the spline interpolates $q$ points, there are overall $4 \cdot
(q - 1)$ parameters to determine.

Imposing the equality up to the second derivative at the connection points
amounts to enforce the following constraints:
\begin{equation}
\left\{
\begin{array}{rclr}
    s_i(x_i)    & = & y_i   & i = \{0, \ldots, q - 1\},\\
    s_i(x_{i+1})    & = & y_{i+1}   & i = \{0, \ldots, q - 1\},\\
    s'_i(x_{i+1}) & = & s'_{i+1}(x_{i + 1}) & i = \{0, \ldots, q - 2\},\\
    s''_i(x_{i+1}) & = & s''_{i+1}(x_{i + 1}) & i = \{0, \ldots, q - 2\}.
\end{array}
\right.
\end{equation}
Since there are $4q - 2$ constraints and $4q$ parameters, two degrees of freedom remain.
They can be set in different ways.
For instance, one can impose $s''_{0}(x_0) = 0$ and $s''_{q-1}(x_q) = 0$.
This corresponds to imposing zero curvature at both endpoints of the spline.

\subsection{\utasp{}: using splines as marginals}

We give some detail on how using splines to model marginal
value functions of an additive value function model.
We formulate a semidefinite program that learns
the parameters of such a model.

\subsubsection{Overview}

Using splines continuous up to either the first or the second derivative instead of
piecewise linear functions for the marginal value functions aims at obtaining
more natural functions around the breakpoints.

With \utap{}, the flexibility of the model is improved by using
polynomials of higher degrees.
In order to further improve the flexibility of the model, we propose now
to hybridize the original \uta{} method which splits the criterion
domain into $k$ equal parts with the \utap{} approach which uses
polynomials to model the marginal value functions.
We call this new disaggregation procedures \utasp{}.
The \utasp{} method combines the use of piecewise functions for the
marginals (as in \uta{}) and the use polynomials (as in \utap{}) for each
piece of the function.

Compared to \uta{}, in \utasp{} the continuity of the marginal can be ensured
up to the any derivative at the connection points.
It enables to obtain more natural marginals which have a continuous
curvature.

Constraints concerning the concavity/convexity of the marginal
value functions on some sub-intervals can also be specified, if
the information is available or if the decision maker is able to
specify such constraints. This makes it possible to ``control''
the shape of the obtained model and improve its interpretability
by the decision maker.

\subsubsection{Description of \utasp{}}

In \utasp{}, we model marginals as uniform splines of degree $D_s$.
Formally the marginal of criterion $j$ reads:
\begin{equation*}
    u^*_j(a_j) = \mathit{Sp}^{D_s,k}_j(a_j)
\end{equation*}
where $\mathit{Sp}^{D_s}_j$ denotes a uniform spline of degree $D_s$
composed of $k$ pieces.
Each piece of the spline $\mathit{Sp}^{D_s,k}_j(a_j)$ is a polynomial of
degree $D_s$ denoted by $s_{j,l}(a_j)$, $l = \{1, \dots, k\}$.
Formally it reads:
\begin{equation*}
    s_{j,l}(a_j) = s_{j,l,0} + s_{j,l,1} a_j + s_{j,l,2} a_j^2
        + \ldots + s_{j,l,D_s} a_j^{D_s}.
\end{equation*}

The pairs $(g_j^{l-1}, u_j^{l-1})$ and $(g_j^l, u_j^l)$ denote
respectively the coordinates of the initial and final points of the piece $l$
of the spline. The points $g_j^l$ for $l=1$ to $k-1$ partition the criterion domain $[v_{1,j}, v_{2,j}]$ in subintervals. We set $v_{1,j} = g_j^0$ and  $v_{2,j} = g_j^k$. Hence the piece $s_{j,l}$ of the spline is defined on the interval $[g_j^{l-1}, g_j^l]$. The
spline $s_{j,l}$ takes the value $u_j^{l-1}$
(resp. $u_j^l$) on  $g_j^{l-1}$ (resp. $g_j^l$).
The continuity of the spline at the connection points is ensured by
imposing the two following constraints:
\begin{equation*}
\left\{
\begin{array}{rclr}
    s_{j,l}(g_j^{l-1}) & = & u_j^{l-1}  & l = \{1, \ldots, k\},\\
    s_{j,l}(g_j^l) & = & u_j^l      & l = \{1, \ldots, k\}.
\end{array}
\right.
\end{equation*}
Usually, the continuity of the marginals is ensured up to the second
derivative so that slope and concavity at the connection points remain
continuous.
To ensure the continuity of the first derivative, the following
constraints are added:
\begin{equation*}
    s'_{j,l}(g_j^l) = s'_{j,l+1}(g_j^l) \quad l = \{1, \ldots, k-1\}.
\end{equation*}
Similarly, the following constraints are added to ensure the continuity of
the second derivative:
\begin{equation*}
    s''_{j,l}(g_j^l) = s''_{j,l+1}(g_j^l) \quad l = \{1, \ldots, k-1\}.
\end{equation*}
Of course, it is possible to ensure the continuity of the second
derivative only if the marginal polynomials have a degree equal to or higher
than 3.
More generally, it is possible to ensure the continuity of the polynomials
up to the $i^{\text{th}}$ derivative only if the polynomials have a degree
equal to or higher than $i+1$.

As in \utap{}, the main difficulty in \utasp{} is to find polynomials which
ensure the monotonicity of the marginals.
To achieve this, we use the results set out in Section
\ref{subsection-semidefinite_programming}.
Recall that the non-negativity of a univariate polynomial is ensured if it
can be expressed as a sum of squares.
The monotonicity of the marginals is therefore ensured by imposing the
non-negativity of their derivatives on an interval.
Formally, for the piece $l$ of the spline associated to criterion $j$, it
reads:
\begin{equation*}
    s'_{j,l}(a_j) = s_{j,l,1} + 2 s_{j,l,2} a_j + \ldots
        + D_s s_{j,l,D_s} a_j^{D_s - 1} \geq 0.
\end{equation*}
We impose $s'_{j,l}(a_j)$ to be a sum of two \sos{} as specified in Theorem
\ref{theorem-sos_interval}.
Formally it reads:
\begin{equation*}
    s'_{j,l}(a_j) = (x - g_j^{l-1}) \cdot q_{j,l}(a_j)
            + (g_j^l - x) \cdot r_{j,l}(a_j),
\end{equation*}
with $q_{j,l}(a_j)$ and $r_{j,l}(a_j)$ two polynomials that can be
expressed as sums of squares.

Using semidefinite programming, we impose two square matrices $Q_{j,l}$ and
$R_{j,l}$ of size $d = \left \lceil \frac{D_s - 1}{2} \right \rceil + 1$
to be positive semidefinite.
Hence, $q_{j,l}(a_j) = \overline{a_j}^\transp Q_{j,l} \overline{a_j}$ and
$r_{j,l}(a_j) = \overline{a_j}^\transp R_{j,l} \overline{a_j}$, with
$\overline{a_j}^\transp = \begin{pmatrix} 1 & a_j & \ldots &
a_j^d\end{pmatrix}$, are two non-negative polynomials.

The value of the polynomial coefficients $s_{j,l,0}, \ldots, s_{j,l,D_s}$
are obtained by combining the off-diagonal terms of the matrices.

\subsubsection{Link between \utasp{}, \utap{} and \uta{}}

We note that \utasp{} is a generalization of \uta{}.
Indeed, \uta{} is a particular case of \utasp{} in which splines of
the first degree are used.

A similar link exists between \utasp{} and \utap{}.
Indeed, if \utasp{} is used to learn marginals composed of exactly one
piece then it is equivalent to the \utap{} formulation.


\section{Illustrative example}

In this section, we illustrate \utap{} and \utasp{} on an small instance of a ranking problem.
In the first subsection we briefly present the context of the problem.
Then we infer the parameters of \utap{} models and compare the marginals
obtained with \utap{} to the original ones.
Finally we perform the same experiment with \utasp{}.
To formulate and solve the SDP we used CVX, a Matlab software for
\textit{disciplined convex programming}\cite{grantboyd2014}.
The source code of \utap{} and \utasp{} is available at the
following address: \url{http://olivier.sobrie.be}.

\subsection{Context of the problem}

A family plans to spend a one week holiday in France.
They use a search engine which
returns a list of 1000 possible accommodations.
To avoid reviewing the whole list and save time, the family calls  a
\mcda{} analyst.
The first task of the analyst consists in determining which criteria matter to the family.
They identify the following three criteria:
\begin{itemize}
	\item Price: the price of the renting in euros which should be
	minimized;

	\item Distance: the distance from home in kilometers which should
	be minimized;

	\item Size: the size of the accommodation in square meters which
	should be maximized.
\end{itemize}
The family cannot evaluate the importance of the criteria and doesn't want to enter into a formal elicitation procedure. On the contrary, they are ready to make some overall statements that could be used by a model learning method.

Let us assume that the preferences of the family can be represented by an additive value function and that the marginals are
displayed in Figure \ref{fig-plot_app_real_utilities}.
These functions are polynomials of degree 2 ($u_1$ and $u_3$) and 15 ($u_2$).
\begin{figure}[!h]
	\centering
	\includegraphics[scale=1]{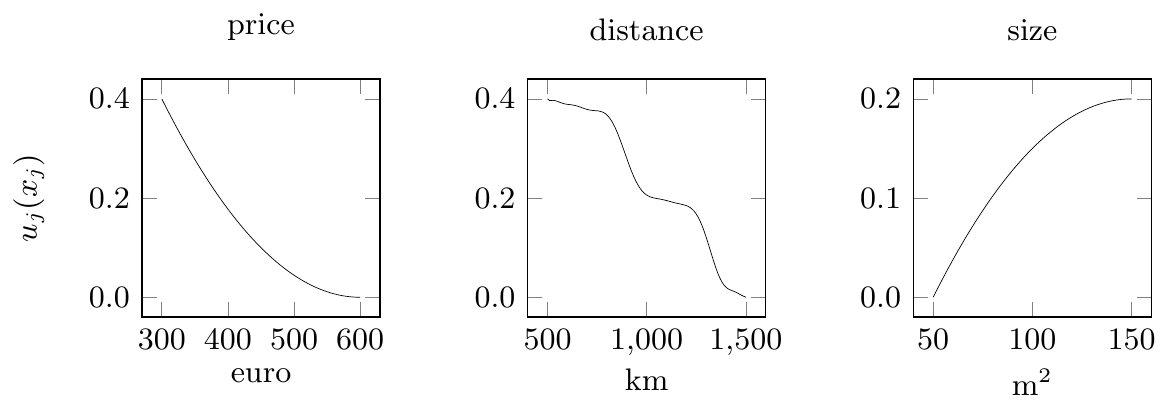}
	\caption{True marginal value functions modeling the family's preferences.}
	\label{fig-plot_app_real_utilities}
\end{figure}

\subsection{\utap{}}

In order to learn the marginals given in Figure
\ref{fig-plot_app_real_utilities}, the family ranks a subset of 50
alternatives chosen randomly in the list according to the
unveiled marginal functions displayed in Figure
\ref{fig-plot_app_real_utilities}.

The 49 informative pairwise comparisons are used to learn, using \utap{}, an
additive value function model with polynomials of degree one to ten.
The inferred value function yields a ranking of the 50 alternatives.
Hence, we can observe the similarity of the initial and inferred
rankings. The evolution of the Spearman distance and Kendall Tau of
these rankings is given in Figure \ref{fig-plot_app_utap_degree} .
We observe that increasing the degree of the polynomial increases
the accuracy of the model. Indeed, the values of the Spearman
distance and Kendall Tau grow as a function of the degree of the
marginals.

In a second step, the analyst asks to the family to include 50 other
alternatives in the ranking.
The analyst provides a set of 99 pairwise comparisons to \utap{}.
As in the first step, polynomials of degree one to ten are learned.
We observe in Figure \ref{fig-plot_app_utap_degree} that the accuracy of
the model is improved with more pairwise comparisons when the marginals
have a small degree (smaller than 8).
With more examples we see that the Spearman distance and Kendall Tau are
slightly better when marginals degree is small and slightly worse
when marginals degree is superior to 9.

\begin{figure}
    \centering
    \includegraphics{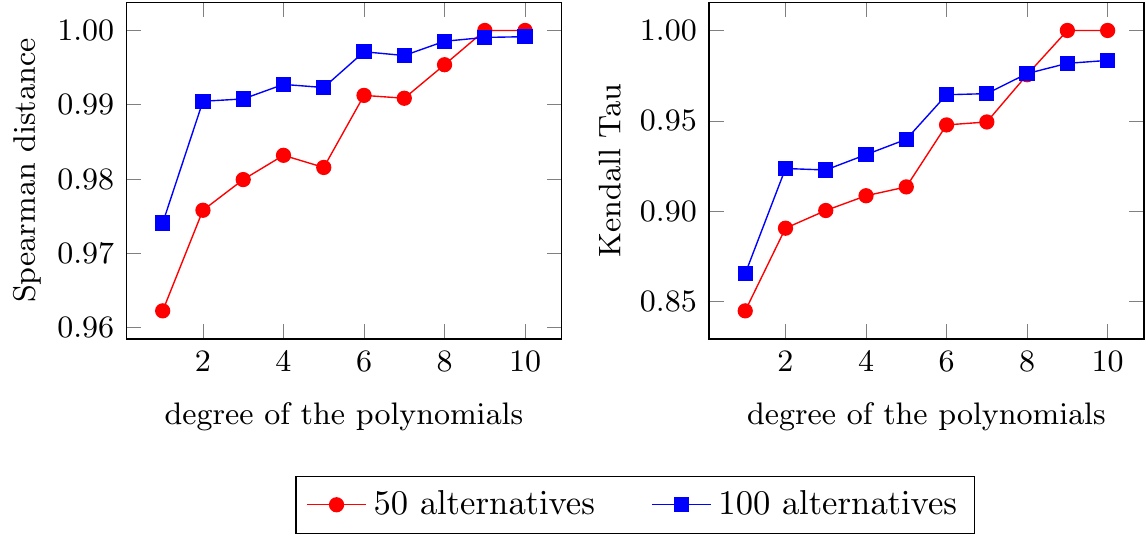}
    \caption{Evolution of the Spearman distance and Kendall Tau of
    the learning set as a function of the degree of the marginal
    polynomials for learning sets composed of 50 and 100 examples.}
    \label{fig-plot_app_utap_degree}
\end{figure}

For illustrative purpose we show in Figure
\ref{fig-plot_app_utasp_1piece} the marginals learned on basis of 100
examples with polynomials of degree 2, 6 and 10.
We see that the marginals $u_1$ and $u_3$ are well approximated with
polynomials of degree 2 to 10.
The major difference is observed for $u_2$.
Using a polynomial of degree 2 approximates roughly the curve.
The two steps of $u_2$ cannot be better approximated by a polynomial of the
second degree since there is no inflexion point with such a polynomial.
The real marginal has at least two inflexion where the steps are located.
With a polynomial of degree 6 we see that the approximation of this curve
is improved but it does not perfectly fit the real marginal.
Indeed the slope is less steep between the inflexion points.
With a polynomial of degree 10 the learned marginal almost perfectly fit
the real marginal.
The inflexion of the curve happens at the same places and the slopes are
similar.

\begin{figure}
    \centering
    \includegraphics{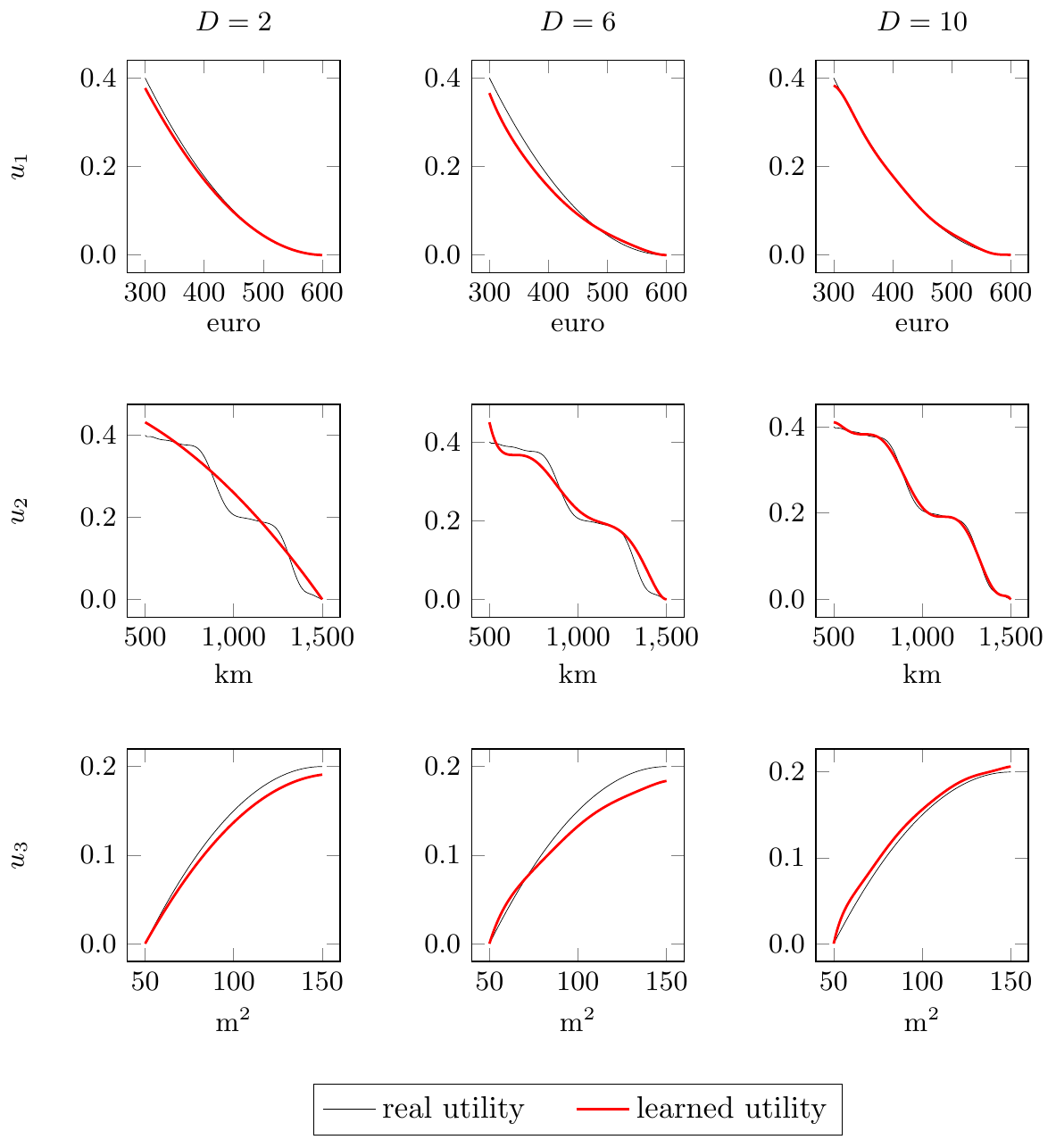}
    \caption{Value functions learned by \utap{} on basis of a
    learning set composed of 100 examples with polynomials of degree
    $D = 2, 6$ and 10.}
    \label{fig-plot_app_utasp_1piece}
\end{figure}

\subsection{\utasp{}}

As for \utap{}, we perform some experimentations with \utasp{} on the application described above. We vary the number of pieces and the polynomial degrees of
\utasp{} and observe the variation in accuracy. We also study the impact of the continuity degree on the splines.

Figure \ref{fig-plot_app_utasp_2to5pieces_degree_sdkt} shows the evolution of the average Spearman distance and Kendall Tau on the learning set. We note that
increasing the number of pieces usually has a positive influence on the way \utasp{} succeeds in restoring the original ranking. \utasp{} is able to
restore the original ranking with smaller polynomial degrees when the number of pieces increases. However it is not always the case. For instance, when using polynomials of degree 1, a \uta{} model composed of 4 pieces performs better than one using 5 pieces. With polynomials of degree greater than 1, \utasp{} always
performs better when the number of pieces is larger.

For illustrative purpose, we show in Figure \ref{fig-plot_app_utasp_5pieces} the marginals obtained with splines of degree $D= 1$ to 3. The continuity of the
splines at the breakpoints ($D_c$) is enforced up to $D-1$. With polynomials of degree 3, we observe that the learned marginals tightly fit the real
marginals.

\begin{figure}
    \centering
    \includegraphics{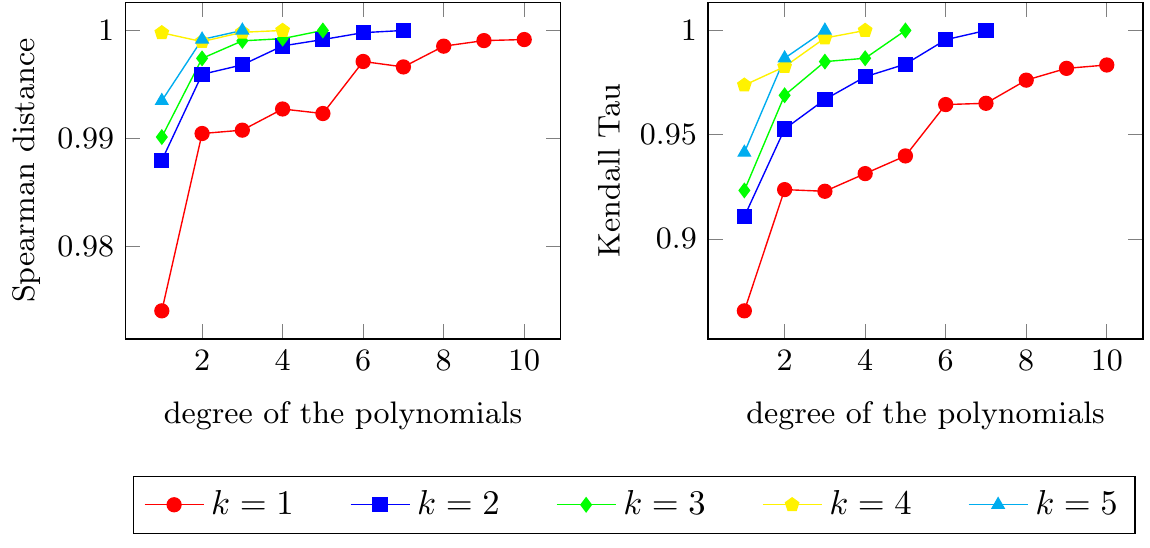}
    \caption{Evolution of the Spearman distance and Kendall Tau of
    the learning set as a function of the degree of the
    marginal polynomials for learning sets composed of 100 examples
    with 1 to 5 polynomials per marginal.}
    \label{fig-plot_app_utasp_2to5pieces_degree_sdkt}
\end{figure}

\begin{figure}
    \centering
    \includegraphics{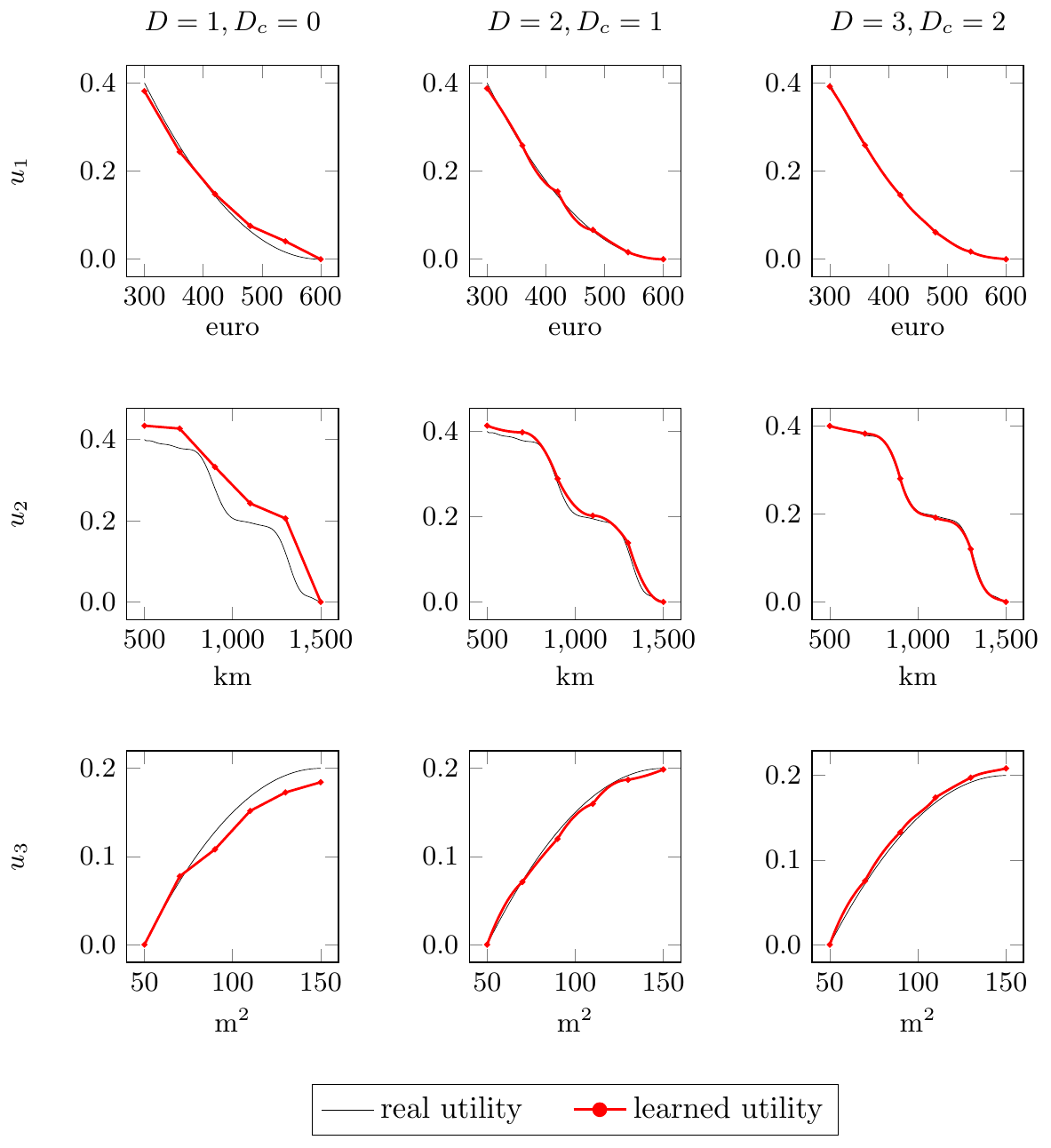}
    \caption{Value functions learned by \utasp{} on basis of a
    learning set composed of 100 examples with polynomials of degree
    $D = 1$ to 3 and marginals composed of 5 polynomials ($k=5$). The
    continuity of the spline ($D_c$) is enforced up to $D-1$.}
    \label{fig-plot_app_utasp_5pieces}
\end{figure}


\section{Experiments}
\label{section-experiments}

So as to understand the behavior of \utap{} and \utasp{}, we
performed experiments on artificial datasets. These experiments aim
at studying the ability of the methods to retrieve a ranking from a
set of pairwise comparisons and the computing time. In the
experiments, we vary different parameters of \utap{} and \utasp{}:
degree of the polynomials ($D$), number of pieces ($k$), the
continuity at breakpoints ($D_c$) and the number of alternatives in
the learning set ($m^*$). As in the previous Section, we formulate
and solve the SDP we used CVX, a Matlab software for
\textit{disciplined convex programming}\cite{grantboyd2014}.

\subsection{Experimental setup}

Our experimental strategy is the following. We start from an
hypothetical additive value model denoted $M$, and generate a set of
alternatives (called learning set). Then we simulate the behavior of
a DM ranking these alternatives, while having the model $M$ in mind.
Hence, we constitute a ranking on the learning set.

We compute an additive value model using \utap{} and \utasp{}
compatible with the ranking of the learning set. We then compare the
inferred models to the model $M$. To do so, we randomly generate
another set of alternatives (test set), and we compute the ranking
of this test set obtained by the model $M$ and by the inferred
model. We then compute the Spearman distance \cite{spearman1904} and
the Kendall Tau \cite{kendall1938} to evaluate how close the inferred
rankings are to the original one.


We considered 8 different models $M$, chosen to represent a wide
variety of value functions (structure and forms of the marginals).
Four of these models are composed of 3 criteria (Figure
\ref{fig-plot_expe_models_marginals}), while the four others are
composed of 5 criteria (Figure
\ref{fig-plot_expe_models_marginals_5criteria}). As shown in Figure
\ref{fig-plot_expe_models_marginals} and
\ref{fig-plot_expe_models_marginals_5criteria}, the marginals are
 of different type: piecewise linear functions, sigmoids,
exponentials, and polynomials of degree 2, 3 and 15.

\begin{figure}[p]
    \centering
    \includegraphics{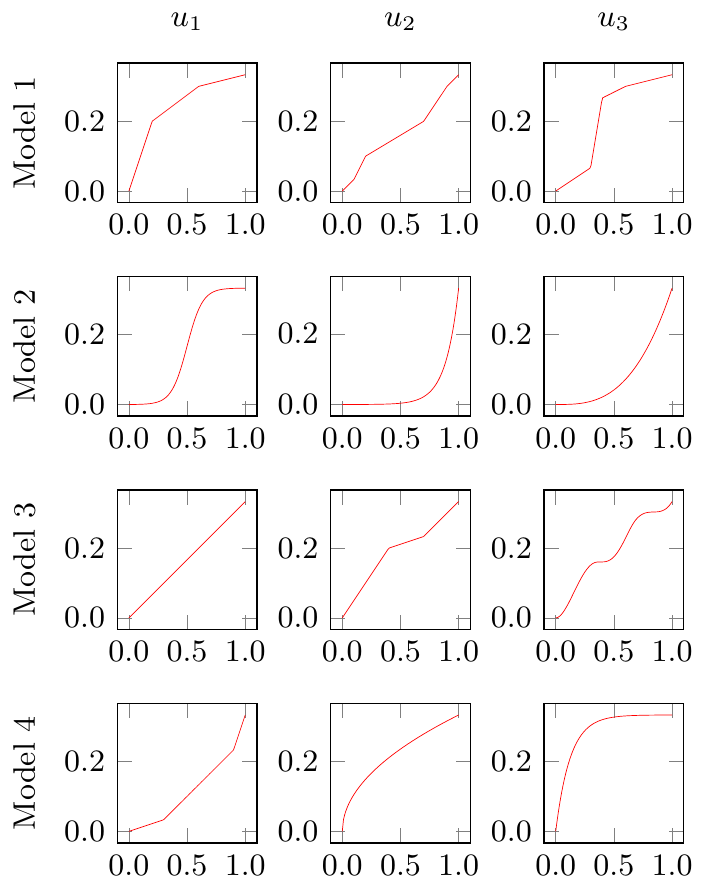}
    \caption{Four additive value function models composed of 3
    criteria.\label{fig-plot_expe_models_marginals}}
\end{figure}

\begin{figure}[p]
    \centering
    \includegraphics{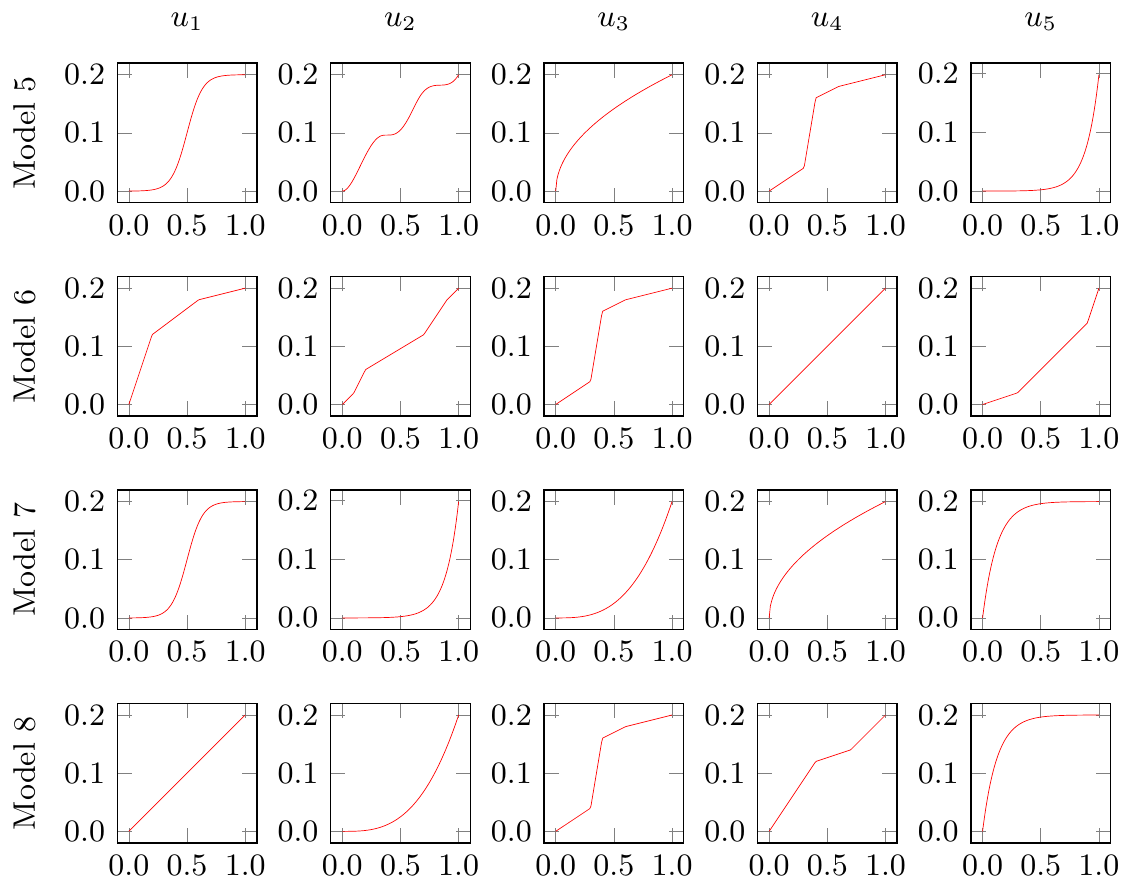}
    \caption{Four additive value function models composed of 5
    criteria.\label{fig-plot_expe_models_marginals_5criteria}}
\end{figure}

For a given model $M$ and a seed $s$, the experimental procedure is
the following:
\begin{enumerate}
    \item The random generator is initialized with the seed $s$.

    \item A set of $m^*$ performances vectors (alternatives) is generated.
    It constitutes the learning set $A^*$.
    Each component $a^*_j$ of a performances vector $a^*=(a^*_1, a^*_2, ..., a^*_n) \in A^*$
    is generated by drawing $n$ a random number uniformly in $[0, 1]$.

    \item The score $U(a^*)$ is computed for each vector of performances
    $a^* \in A^*$ using the value model $M$. A pre-order on these
    alternatives is derived from their scores.
    Given a ranking $\pi^*$ of the alternatives in $A^*$, we denote by
    $\pi^*_i$ the alternative ranked at the $i^{\text{th}}$ position.
    We have $\pi^*_1 \succcurlyeq \pi^*_2 \succcurlyeq \ldots
    \succcurlyeq \pi^*_{m^*}$.

    \item A list of $m^*-1$ pairwise comparisons is induced from the
    complete ranking $\pi^*$.
    It is done by comparing each pair of consecutive alternatives in
    the ranking.
    In a ranking $\pi^*$, it consists in comparing $\pi^*_i$ to $\pi^*_{i+1}$, either by an indifference ($\pi^*_i
    \sim \pi^*_{i+1}$) or a preference ($\pi^*_i \succ
    \pi^*_{i+1}$).
    We denote by $\mathcal{P}^*$ the set containing the pairs of
    alternatives $(a,b)$ such that $a \succ b$, $\mathcal{I}^*$
    denotes the set containing the pairs $(a,b)$ such that $a \sim b$.

    \item The sets $A^*$, $\mathcal{P}^*$ and $\mathcal{I}^*$ are given as input to
    \utasp{}/\utap{}. The algorithm learns an additive utility model $M'$ in which the
    marginals are composed $k$ polynomials of degree $D$.
    The breakpoints of the polynomials are equally spaced on the
    criterion domain.
    The continuity is guaranteed up to the $D_c^{\text{th}}$
    derivative at the breakpoints.


    \item A test set of $m$ alternatives $A$ is generated
    similarly as for the learning set.
    The alternatives in $A$ are ranked with models $M$ and $M'$.
    The obtained ranking $\pi$ and $\hat{\pi}$ are then compared by
    computing the Spearman distance $\mathit{SD}(\pi,\hat{\pi})$ (see \cite{spearman1904}) and
    the Kendall Tau $\mathit{KT}(\pi,\hat{\pi})$ (see \cite{kendall1938}).
\end{enumerate}

\subsection{Model retrieval}

We tested \utap{} and \utasp{} with the models shown in Figures
\ref{fig-plot_expe_models_marginals} and
\ref{fig-plot_expe_models_marginals_5criteria}. Results provided in
this Section are mean values over the 8 different models tested. We
varied the degree of the polynomials ($D$), the number of pieces
($k$), the continuity at the breakpoints ($D_c$). We varied the size
of the learning set ($m^*$) between 10 and 100 alternatives. The
test set was composed of 1000 alternatives. For each setting, we ran
the test procedure described above with 10 random seeds.

This experiment shows how the number of comparisons impacts the
ability to elicit the parameters of a model $M$ composed of $n$
criteria. The experiment also shows the impact of the number of
pieces per marginal and of the degree of the polynomial.

\subsubsection{\utap{}}

The first test consists in testing \utap{} with only one piece per
marginal ($k = 1$). We show in Figure
\ref{fig-plot_expe_utasp_1piece_test_avg} the average Spearman
distance and Kendall Tau of the test set of the models composed of 3
criteria when the degree of the learned marginals ($D$) vary from 1
(which corresponds to a weighted sum) to 4. The values of the
Spearman distance and Kendall Tau increase as a function of the
number of alternatives in the learning set. For the same number of
examples in the learning set, the quality of the ranking is improved
as the degree of the polynomial increases. We observe the same
behavior with models composed of 5 criteria (Figure
\ref{fig-plot_expe_utasp_5criteria_1piece_test_avg}). Detailed
results per model are available in \ref{section-detailed_results}.

\begin{figure}
    \centering
    \includegraphics{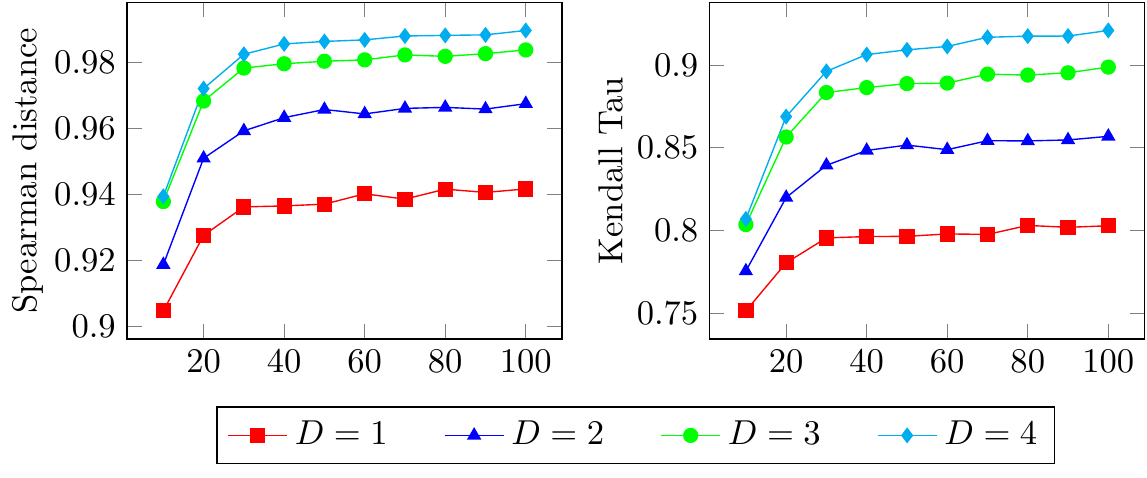}
    \caption{Average Spearman distance and Kendall Tau of the test set
    with the models composed of 3 criteria learned by \utap{} when the
    degree of the marginals vary between 1 and 4.}
    \label{fig-plot_expe_utasp_1piece_test_avg}
\end{figure}

\begin{figure}
    \centering
    \includegraphics{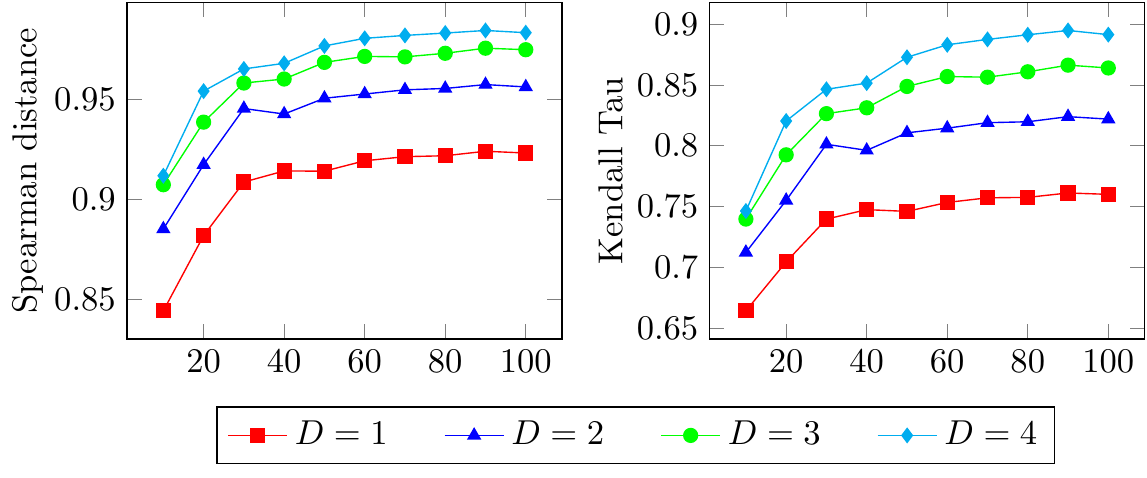}
    \caption{Average Spearman distance and Kendall Tau of the test set
    with the models composed of 5 criteria learned by \utap{} when the
    degree of the marginals vary between 1 and 4.}
    \label{fig-plot_expe_utasp_5criteria_1piece_test_avg}
\end{figure}

\subsubsection{\utasp{}}

In the second test, we varied the number of pieces per marginals
($k$) from 1 to 5 and used polynomials of degree 3. The continuity
at the breakpoints is ensured up to the second derivative. Figure
\ref{fig-plot_expe_utasp_npieces_test_avg} shows the average
Spearman distance and Kendall Tau of the test set for the models
composed of 3 criteria. We observe that increasing the number of
pieces helps to increase the accuracy of the model. With models
composed of 5 criteria (see
Figure~\ref{fig-plot_expe_utasp_npieces_5criteria_test_avg}), we
observe the same behavior. It depicts a general trend for the model
presented in Figure \ref{fig-plot_expe_models_marginals} and
\ref{fig-plot_expe_models_marginals_5criteria}. Nevertheless one has
to be cautious to overfitting effects when the number of pieces
increases and to the position of the breakpoints. Indeed increasing
the number of pieces increases the number of parameters of the model
and its flexibility which may lead to overfitting. In
\ref{section-detailed_results} we present the detailed results for
each model of Figure \ref{fig-plot_expe_models_marginals} and
\ref{fig-plot_expe_models_marginals_5criteria}.

\begin{figure}[!h]
    \centering
    \includegraphics{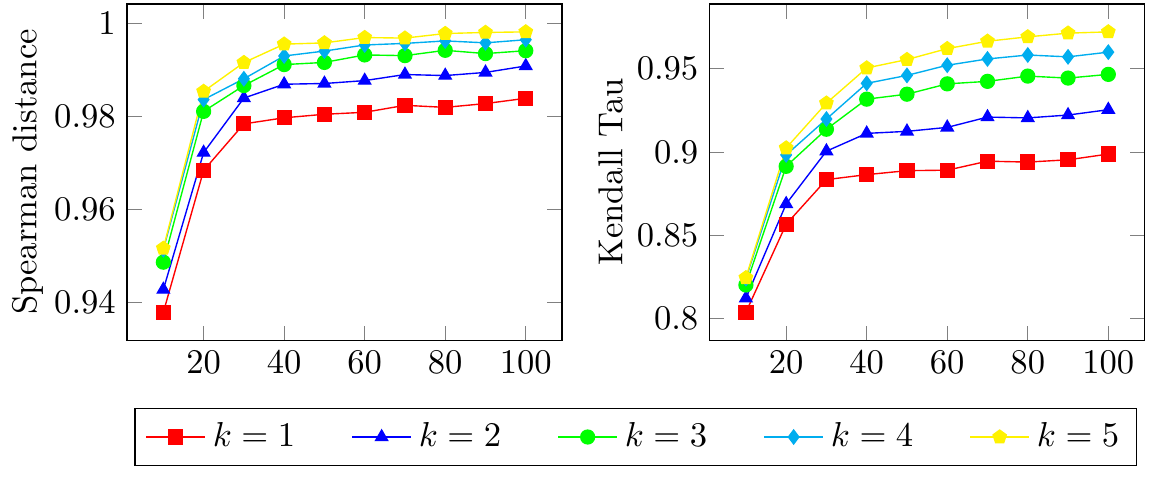}
    \caption{Average Spearman distance and Kendall Tau of the test set
    with the models composed of 3 criteria learned by \utasp{} with
    marginals composed of polynomials of the third degree. The
    continuity at the breakpoints is ensured up to the second
    derivative.}
    \label{fig-plot_expe_utasp_npieces_test_avg}
\end{figure}

\begin{figure}[!h]
    \centering
    \includegraphics{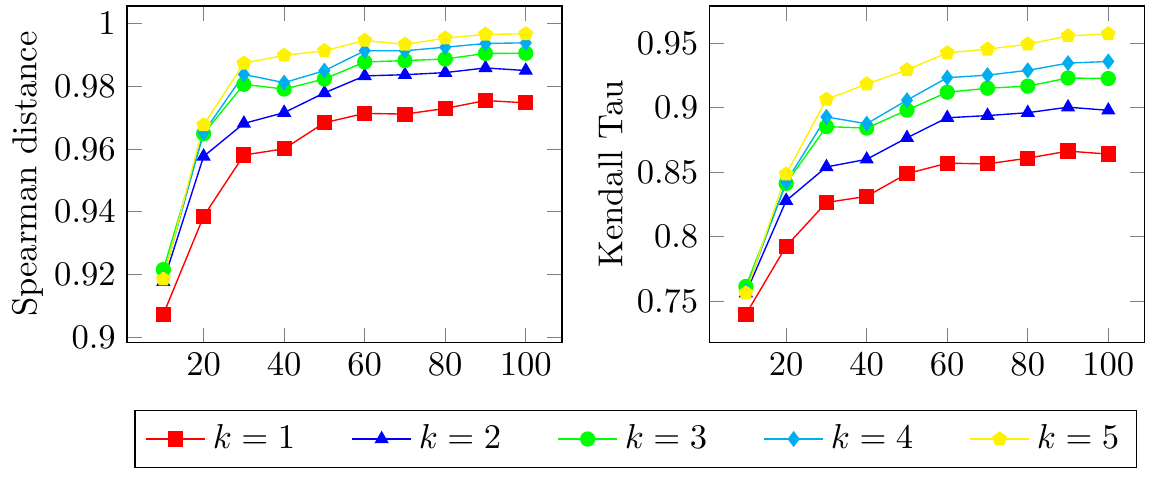}
    \caption{Average Spearman distance and Kendall Tau of the test set
    with the models composed of 5 criteria learned by \utasp{} with
    marginals composed of polynomials of the third degree. The
    continuity at the breakpoints is ensured up to the second
    derivative.}
    \label{fig-plot_expe_utasp_npieces_5criteria_test_avg}
\end{figure}

\subsection{Computing time}

The computing time highly depends on the number of constraints and
variables that are involved. The number of constraints and variables
are expressed by the following equations:
\begin{align*}
    \text{\#constraints}
        & = m + n + 2 n k + n k D
            + (1 + D_c) n (k - 1),\\
    \text{\#variables}
        & = n k (D + 1)
        + 2 n k \left\lceil \frac{D}{2} \right\rceil^2
        + 2m.
\end{align*}
We give in Table \ref{table-nconstraintsvars} the number of constraints
and variables for different problem sizes.

\begin{table}
\centering
\begin{tabular}{rrrrrrrr}
    \toprule
    $m$ & $n$ & $k$ & $D$ & $D_c$ & \#const. & \#var.
        & computing time (sec.)\\
    \midrule
    10 & 3 & 1 & 1 & 0 & 22 & 32 & $0.48 \pm 0.15$\\
    10 & 3 & 5 & 1 & 0 & 70 & 80 & $1.02 \pm 0.34$\\
    10 & 3 & 1 & 4 & 0 & 31 & 59 & $0.86 \pm 0.19$\\
    10 & 3 & 5 & 4 & 2 & 139 & 215 & $1.96 \pm 0.29$\\
    10 & 5 & 5 & 4 & 2 & 225 & 345 & $2.99 \pm 0.36$\\
    100 & 3 & 1 & 1 & 0 & 112 & 212 & $1.96 \pm 0.14$\\
    100 & 3 & 5 & 1 & 0 & 160 & 260 & $2.58 \pm 0.14$\\
    100 & 3 & 1 & 4 & 0 & 121 & 239 & $2.96 \pm 0.14$\\
    100 & 3 & 5 & 4 & 2 & 229 & 395 & $3.92 \pm 0.20$\\
    100 & 5 & 5 & 4 & 2 & 315 & 525 & $5.90 \pm 0.35$\\
    \bottomrule
\end{tabular}
\caption{Number of constraints and variables for different problem sizes
and average computing time and standard deviation.}
\label{table-nconstraintsvars}
\end{table}

We observe that the computing time evolves linearly with the number
of examples that are given as input to the algorithm. For the
inference of a \utap{} model, the higher the degree of the
polynomials, the higher computing time; however the difference is
not substantial. Compared to an \uta{} model, learning a \utap{}
model using polynomials of the \nth{4} degree increases the
computing time of a few dozen of milliseconds. The behavior is
similar when passing from one to several pieces per marginal. When
the number of criteria increases, we observe that the computing time
increases too.

Lastly, it should be highlighted that computing times for all
instances solved in this Section are reasonably short (less than 6
sec.), and compatible with an iterative and interactive use with a
DM.


\section{Conclusion}

In this paper, we propose a new method to learn an additive value
function model from a set of statements provided by the DM. Learning
piecewise linear value functions from preference statements is
standard in the literature (UTA methods, e.g.
\cite{jaquetlsiskos1982}, \cite{jaquetlsiskos2001}). Instead of
piecewise linear marginals, we generalize this standard
representation by considering more general forms for marginals.
\utap{} considers marginal value functions which are monotone
polynomials, while in \utasp{} marginals are composed of several
pieces of monotone polynomials.
\utasp{} generalizes the preference representation used in the standard UTA
methods, while \utap{} is a particular \utasp{} model where a single polynomial is used to represent each marginal.

The inference of such an additive value function with polynomial
marginals is performed using a semidefinite programming formulation.
From a computational point of view, the resolution of instances
corresponding to real datasets is limited to several seconds, and
thus compatible with an interactive use with DMs.

We provide an illustrative example showing that the inference
program is able to restore value functions that are ``close'' to the
original ones. A specific feature of the methods is that the
inferred value function is composed of  ``smooth'' marginals which
avoids brutal changes in the slopes of these marginals, thus
improving interpretability.

The computational experiments show the ability of the methods to
better match the preference statements as the degree of the
polynomials involved in the marginals increases.

It should be noted that the methods proposed in this paper, applies
to ranking problems but can be directly extended to sorting
problems, hence defining \utadisp{} and \utadissp{} (see \cite{sobrie2016}).

An innovative aspect of this work is related to the new optimization
technique allowing to deal with polynomial and piecewise polynomial
marginals instead of piecewise linear marginals. The semidefinite
programming approach used in this paper for UTA opens new
perspectives for eliciting other preference models based on additive
or partly additive value structures, such as additive differences
models (MACBETH \cite{bana1994macbeth,bana2005}) and GAI networks
\citep{GonzalesEtAl2011}.

Similarly as for UTA models (cf.\@ the discussion in the Introduction),
the solution of our new models might not be unique. It would be
interesting to try to characterize these situations and pick a solution
that is most suited for the DM. Note that, for this work, we used
interior-point methods to solve the semidefinite programs. These methods
return the so-called analytic center of the set of optimal solutions, that
is, it returns a solution `in the middle' of the set of optimal solutions,
similarly as UTA-STAR and ACUTA would do for UTA models.

An interesting line for further research concerns the experimental
comparison of \utap{} and \utasp{} with classical UTA methods, in
particular in what concerns the size of the set of reference
alternatives required to adequately elicit the preference model.
Another area of interest for research concerns the extension of
the present methods to the paradigm of Robust Ordinal Regression.
It would be interesting to investigate how to identify the most
``simple''\footnote{Simplicity is hard to define precisely, but is
related to having polynomials with the smallest possible degrees
and no more changes of concavity than required} value function
compatible with the preference information ; this could be done by
introducing a regularization term in the objective function.
Lastly, when the set of preference statements is not representable
by a given preference model (\utap{}, with a given degree of
polynomials, \utasp{}, with a given degree and given number of
pieces), the issue of solving inconsistencies in the spirit of
\cite{mous5ejor03} is worth further investigation.

\FloatBarrier
\section*{References}
\bibliographystyle{plain}
\bibliography{mcda,pl,sdp}

\FloatBarrier 
\begin{appendix}

\section{Example of a semi-definite program}
\label{appendix-example_sdp}

We consider a ranking problem involving 2 criteria $x$ and $y$ and three
alternatives, $a^1$, $a^2$ and $a^3$.
The performances of these alternatives are given in Table
\ref{table-example_cvx_program_pt}.
The criterion values vary between 0 and 10.

\begin{table}[h]
\centering
\begin{tabular}{lrr}
	\toprule
		& $x$ & $y$\\
	\midrule
	$a^1$	& 10	& 7\\
	$a^2$	& 6	& 8\\
	$a^3$	& 7	& 5\\
	\bottomrule
\end{tabular}
\caption{Performances of alternative $a^1$, $a^2$ and $a^3$ on criteria
$x$ and $y$.}
\label{table-example_cvx_program_pt}
\end{table}

A decision maker states that the following ranking holds: $a^1 \succ a^2
\succ a^3$.
We use the objective and the set of constraints given in Equation
\eqref{eq-utadis_constraints} in order to
find a model restoring this ranking.
We use semi-definite programming to learn polynomial marginal utility
functions.
We denote by $u^*_1$ and $u^*_2$ the polynomial functions associated
respectively to criteria 1 and 2.
The degree of the polynomials of the marginal utility functions is fixed
to 3.

To ensure the monotonicity of functions $u^*_1$ and $u^*_2$, we impose
the non-negativity of their derivative.
Formally, we define $u^*_1$ and $u^*_2$ as follows:
\begin{align*}
	u^*_1(x) & = p_{x,0} + p_{x,1} \cdot x + p_{x,2} \cdot x^2
		+ p_{x,3} \cdot x^3,\\
	u^*_2(y) & = p_{y,0} + p_{y,1} \cdot y + p_{y,2} \cdot y^2
		+ p_{y,3} \cdot y^3.
\end{align*}
The derivative of $u^*_1(x)$ and $u^*_2(y)$ are equal to:
\begin{equation*}
	\frac{d u^*_1}{dx}
		=  p_{x,1} + 2p_{x,2} \cdot x +  3p_{x,3} \cdot x^2\\
	\text{\qquad and \qquad}\\
	\frac{d u^*_2}{d y}
		=  p_{y,1} + 2p_{y,2} \cdot y + 3p_{y,3} \cdot y^2.
\end{equation*}
The monotonicity of a polynomial marginal is ensured if its derivative is a
sum of square.
Formally, it reads:
\begin{align*}
	\frac{d u^*_1}{dx}
	& =
	\overline{x}^\transp Q \overline{x}\\
	& =
	\begin{pmatrix}
	1\\
	x
	\end{pmatrix}^\transp
	\begin{pmatrix}
	q_{0,0} & q_{0,1}\\
	q_{1,0} & q_{1,1}
	\end{pmatrix}
	\begin{pmatrix}
	1\\
	x
	\end{pmatrix}\\
	& = q_{0,0} + \left(q_{0,1} + q_{0,1}\right) x
		+ q_{1,1} x^2,\\
	\frac{d u^*_2}{d y}
	& =
	\overline{y}^\transp R \overline{y}\\
	& = r_{0,0} + \left(r_{0,1}
		+ r_{1,0}\right) y + r_{1,1} y^2.
\end{align*}
To ensure the non-negativity of the derivative, we impose the matrices
$Q$ and $R$ to be semi-definite positive in conjunction with
these constraints:
\begin{equation*}
\begin{cases}
	p_{x,1} & = q_{0,0},\\
	2 p_{x,2} & = q_{0,1} + q_{1,0},\\
	3 p_{x,3} & = q_{1,1},
\end{cases}\\
{\text{\qquad and \qquad}}\\
\begin{cases}
	p_{y,1} & = r_{0,0},\\
	2 p_{y,2} & = r_{0,1} + r_{1,0},\\
	3 p_{y,3} & = r_{1,1}.
\end{cases}
\end{equation*}
The utility values of $a^1$, $a^2$ and $a^3$ read:
\begin{align*}
	U(a^1) & = p_{x,0} + 10 p_{x,1} + 100 p_{x,2} + 1000 p_{x,3}
		+ p_{y,0} + 7 p_{y,1} + 49 p_{y,2} + 343 p_{y,3},\\
	U(a^2) & = p_{x,0} + 6 p_{x,1} + 36 p_{x,2} + 324 p_{x,3}
		+ p_{y,0} + 8 p_{y,1} + 64 p_{y,2} + 512 p_{y,3},\\
	U(a^3) & = p_{x,0} + 7 p_{x,1} + 49 p_{x,2} + 343 p_{x,3}
		+ p_{y,0} + 5 p_{y,1} + 25 p_{y,2} + 125 p_{y,3}.
\end{align*}
To find a model reflecting the ranking given as input, i.e. $a^1 \succ a^2
\succ a^3$, we have to fulfil two conditions: $a^1 \succ a^2$ and $a^2
\succ a^3$.
It is done by adding the following constraints:
\begin{equation*}
\left\{
\begin{array}{rcl}
	U(a^1) - U(a^2) + \sigma^+(a^1) - \sigma^-(a^1)
		- \sigma^+(a^2) + \sigma^-(a^2) & > & 0,\\
	U(a^2) - U(a^3) + \sigma^+(a^2) - \sigma^-(a^2)
		- \sigma^+(a^1) + \sigma^-(a^1) & > & 0.
\end{array}
\right.
\end{equation*}
After substituting $U(a^1)$, $U(a^2)$ and $U(a^3)$ by their value we
obtain the two following constraints:
\begin{equation*}
\left\{
\begin{array}{rcl}
	4 p_{x,1} + 64 p_{x,2} - p_{y,1} - 15 p_{y,2}
		+ \sigma^+(a^1) - \sigma^-(a^1)\\
	\multicolumn{1}{r}{- \sigma^+(a^2) + \sigma^-(a^2)} & > & 0,\\
	- p_{x,1} - 13 p_{x,2} + 3 p_{y,1} + 39 p_{y,2}
		+ \sigma^+(a^2) - \sigma^-(a^2)\\
	\multicolumn{1}{r}{- \sigma^+(a^3) + \sigma^-(a^3)} & > & 0.
\end{array}
\right.
\end{equation*}
Given that criteria domains are comprised between 0 and 10, the following
constraints hold:
\begin{equation*}
\left\{
\begin{array}{rcl}
	p_{x,0} & = & 0,\\
	p_{y,0} & = & 0,\\
	10 p_{x,1} + 100 p_{x,2} + 1000 p_{x,3}
		+ 10 p_{y,1} + 100 p_{y,2} + 1000 p_{y,3} & = & 1.
\end{array}
\right.
\end{equation*}
Finally, by assembling the objective function and the constraints, we
obtain the following semi-definite program:
\begin{equation*}
	\min{\sigma^+(a^1) + \sigma^-(a^1) + \sigma^+(a^2)
		+ \sigma^-(a^2) + \sigma^+(a^3) - \sigma^-(a^3)}
\end{equation*}
such that:
\begin{equation*}
\left\{
\begin{array}{rcl}
	4 p_{x,1} + 64 p_{x,2} + 776 p_{x,3}
		- p_{y,1} - 15 p_{y,2} - 231 p_{y,3}\\
	\multicolumn{1}{r}{+ \sigma^+(a^1) - \sigma^-(a^1)
		- \sigma^+(a^2) + \sigma^-(a^2)} & > & 0,\\
	- p_{x,1} - 13 p_{x,2} - 19 p_{x,3}
		+ 3 p_{y,1} + 39 p_{y,2} + 387 p_{y,3}\\
	\multicolumn{1}{r}{+ \sigma^+(a^2) - \sigma^-(a^2)
		- \sigma^+(a^3) + \sigma^-(a^3)} & > & 0,\\
	p_{x,0} & = & 0,\\
	p_{y,0} & = & 0,\\
	10 p_{x,1} + 100 p_{x,2} + 1000 p_{x,3}
		+ 10 p_{y,1} + 100 p_{y,2} + 1000 p_{y,3} & = & 1,\\
	p_{x,1} & = & q_{0,0},\\
	2 p_{x,2} & = & q_{0,1} + q_{1,0},\\
	3 p_{x,3} & = & q_{1,1},\\
	p_{y,1} & = & r_{0,0},\\
	2 p_{y,2} & = & r_{0,1} + r_{1,0},\\
	3 p_{y,3} & = & r_{1,1},
\end{array}
\right.
\end{equation*}
with:
\begin{equation*}
\left\{
\begin{array}{rcl}
	Q, R & \multicolumn{2}{l}{PSD,}\\
	\sigma^+(a^1), \sigma^-(a^1),
		\sigma^+(a^2), \sigma^-(a^2),
		\sigma^+(a^3), \sigma^-(a^3),& \geq & 0.\\
\end{array}
\right.
\end{equation*}

\section{Cholesky factorization}
\label{section-cholesky}

The factorization of Cholesky consists in decomposing a positive
semi-definite matrix $M$ into the product of a lower triangular
matrix $L$ and its transpose $L^\transp$.
Formally it reads:
\begin{equation}
	M = LL^{\transp}.
\label{eq-cholesky}
\end{equation}

The decomposition works as follows.
For a matrix $a$ of size $d \times d$, Equation \eqref{eq-cholesky} reads:
\begin{align*}
	M & = \begin{pmatrix}
	m_{1,1} & m_{1,2} & m_{1,3} & \cdots & m_{1,d}\\
	m_{2,1} & m_{2,2} & m_{2,3} & \cdots & m_{2,d}\\
	m_{3,1} & m_{3,2} & m_{3,3} & \cdots & m_{3,d}\\
	\vdots & \vdots & \vdots & \ddots & \vdots\\
	m_{d,1} & m_{d,2} & m_{d,3} & \cdots & m_{d,d}
	\end{pmatrix}\\
	& = \begin{pmatrix}
	l_{1,1} & 0 & 0 & \cdots & 0\\
	l_{2,1} & l_{2,2} & 0 & \cdots & 0\\
	l_{3,1} & l_{3,2} & l_{3,3} & \cdots & 0\\
	\vdots & \vdots & \vdots & \ddots & \vdots\\
	l_{d,1} & l_{d,2} & l_{d,3} & \vdots & l_{d,d}
	\end{pmatrix}
	\cdot
	\begin{pmatrix}
	l_{1,1}	& l_{2,1} & l_{3,1} & \cdots & l_{d,1}\\
	0 & l_{2,2} & l_{3,2} & \cdots & l_{d,2}\\
	0 & 0 & l_{3,3} & \cdots & l_{d,3}\\
	\vdots & \vdots & \vdots & \ddots & \vdots\\
	0 & 0 & 0 & \cdots & l_{d,d}
	\end{pmatrix}\\
	& = \begin{pmatrix}
	l_{1,1}^2 & & & & \text{(symmetric) }\\
	l_{2,1} l_{1,1} & l_{2,1}^2 + l_{2,2}^2\\
	l_{3,1} l_{1,1} & l_{3,1} l_{2,1} + l_{3,2} l_{2,2}
			& l_{3,1}^2 + l_{3,2}^2 + l_{3,3}^2\\
	\vdots & \vdots & \vdots & \ddots\\
	l_{1,1} l_{d,1} & l_{2,1} l_{d,1} + l_{2,2} l_{d,2}
			& l_{3,1} l_{d,1} + l_{3,2} l_{d,2} + l_{3,3} l_{d,3}
			& \cdots & \sum_{i = 1}^d l_{d,i}^2
	\end{pmatrix}.
\end{align*}
The value $m_{i,i}$ and $m_{i,j}$ can be expressed as follows:
\begin{equation*}
	m_{i,i} = \sum_{k = 1}^i l_{i,k}^2
	\text{\qquad and \qquad}
	m_{i,j} = \sum_{k = 1}^j l_{i,k} l_{j,k}
\end{equation*}
The value of the variables $l_{i,i}$ and $l_{i,j}$ are then given by
\begin{equation*}
	l_{i,i} = \sqrt{m_{i,i} - \sum_{k = 1}^{i-1} l_{i,k}^2}
	\text{\qquad and \qquad}
	l_{i,j} = \frac{1}{m_{i,i}}
		\left(m_{i,j} - \sum_{k = 1}^{j - 1} l_{i,k} l_{j,k}\right)
\end{equation*}

\section{Detailed results of the experiments}
\label{section-detailed_results}

Figure \ref{fig-plot_expe_utasp_1piece_test} and
\ref{fig-plot_expe_utasp_5criteria_1piece_test} show the average Spearman
distance and Kendall Tau of the test set after running the experiment
described in Section \ref{section-experiments} with \utap{}.

\begin{figure}[!h]
	\centering
	\includegraphics{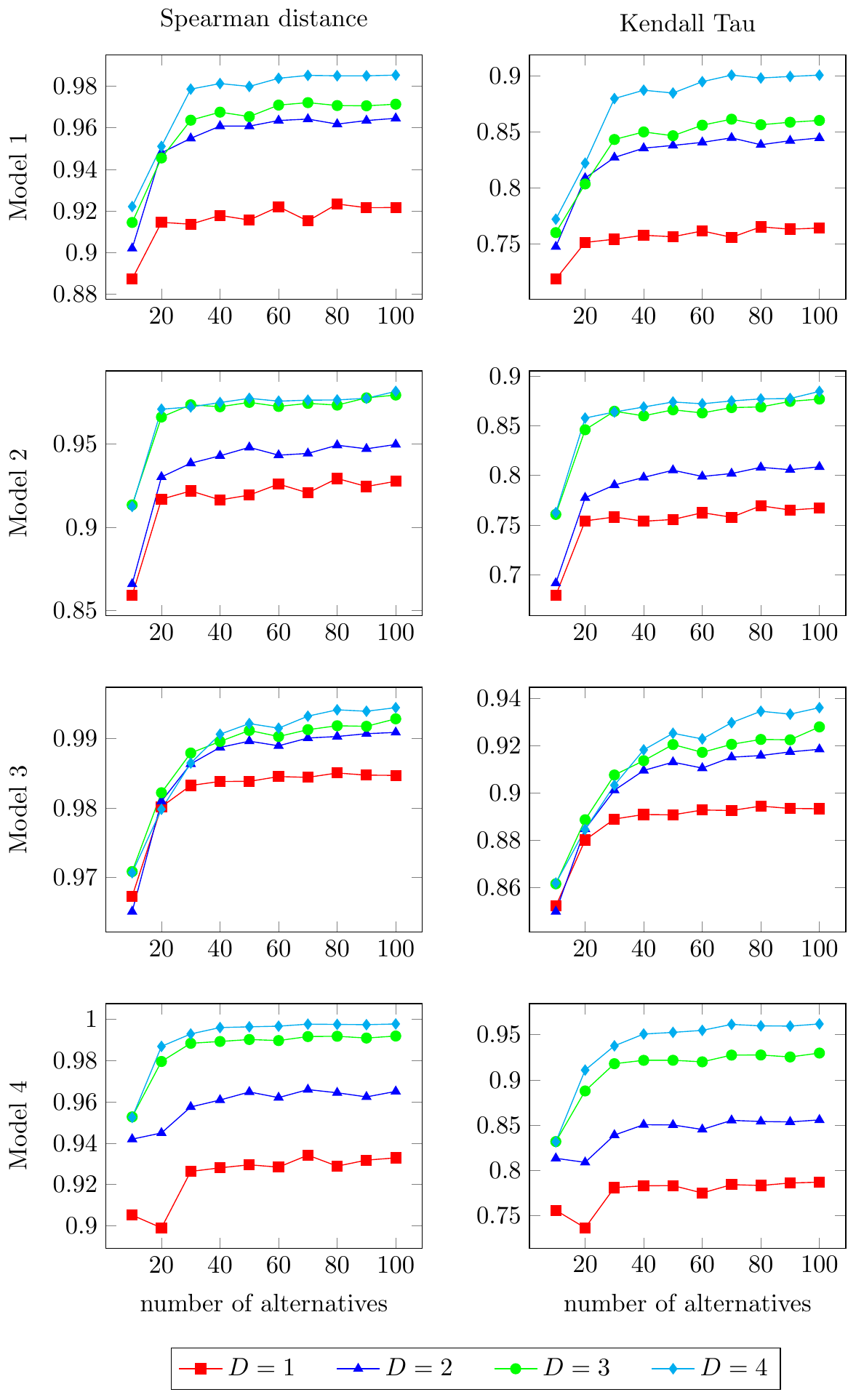}
	\caption{Average Spearman distance and Kendall Tau of the test set
	of models 1 to 4 learned by \utap{} when the
	degree of the marginals vary between 1 and 4.}
	\label{fig-plot_expe_utasp_1piece_test}
\end{figure}

\begin{figure}[!h]
	\centering
	\includegraphics{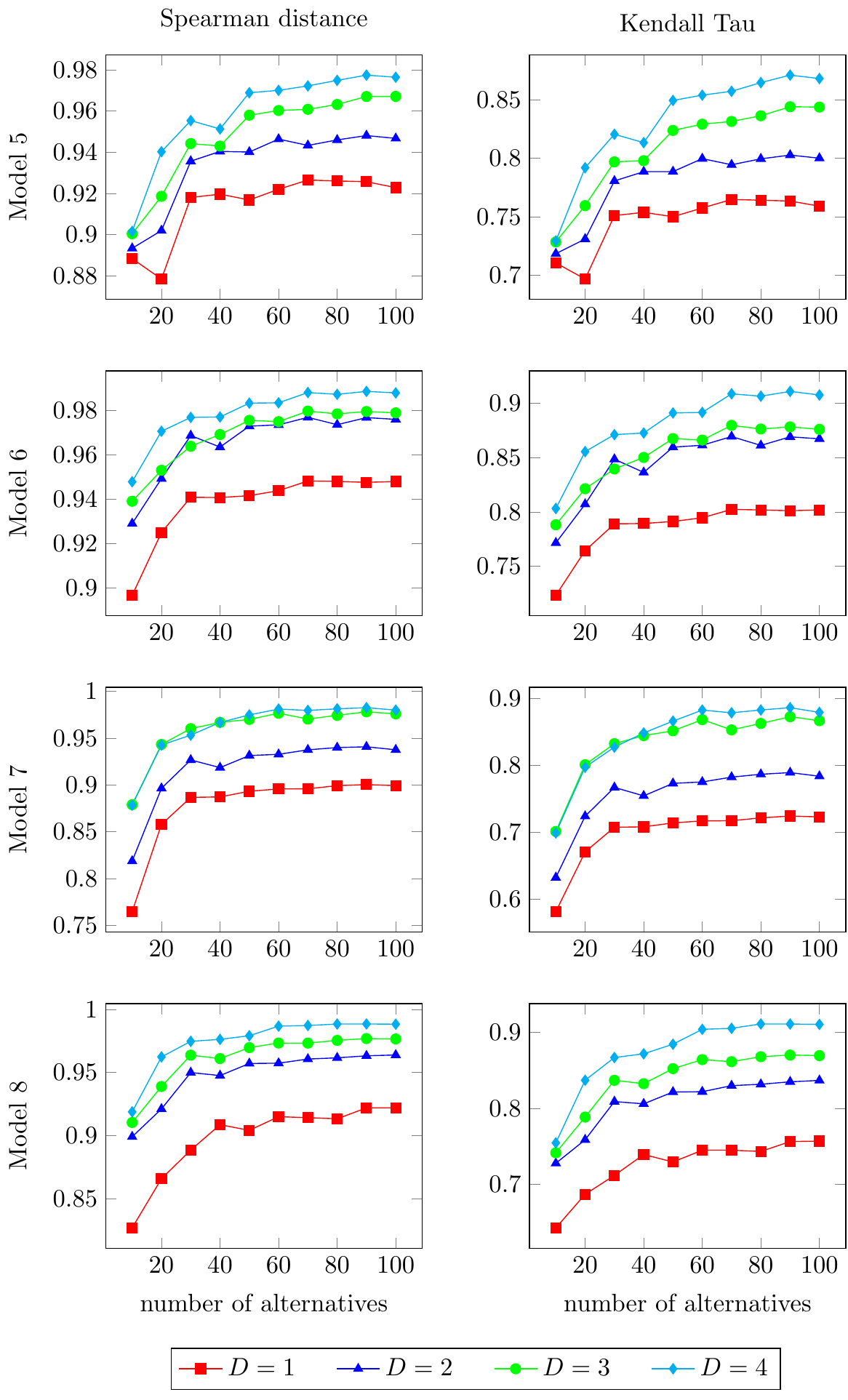}
	\caption{Average Spearman distance and Kendall Tau of the test set
	of models 5 to 8 learned by \utap{} when the
	degree of the marginals vary between 1 and 4.}
	\label{fig-plot_expe_utasp_5criteria_1piece_test}
\end{figure}

Figure \ref{fig-plot_expe_utasp_npieces_test} shows the average Spearman
distance and Kendall Tau obtained with \utasp{} for the four model
composed of 3 criteria presented at Figure
\ref{fig-plot_expe_models_marginals}.
The learned models are composed of polynomials of the third degree which
are continuous up to the second derivative at the connection points.
The number of piece per value function varies between 1 and 5.
Similarly, Figure \ref{fig-plot_expe_utasp_npieces_5criteria_test} shows
the average Spearman distance and Kendall Tau obtained with the four model
composed of 5 criteria.


\begin{figure}[!h]
	\centering
	\includegraphics{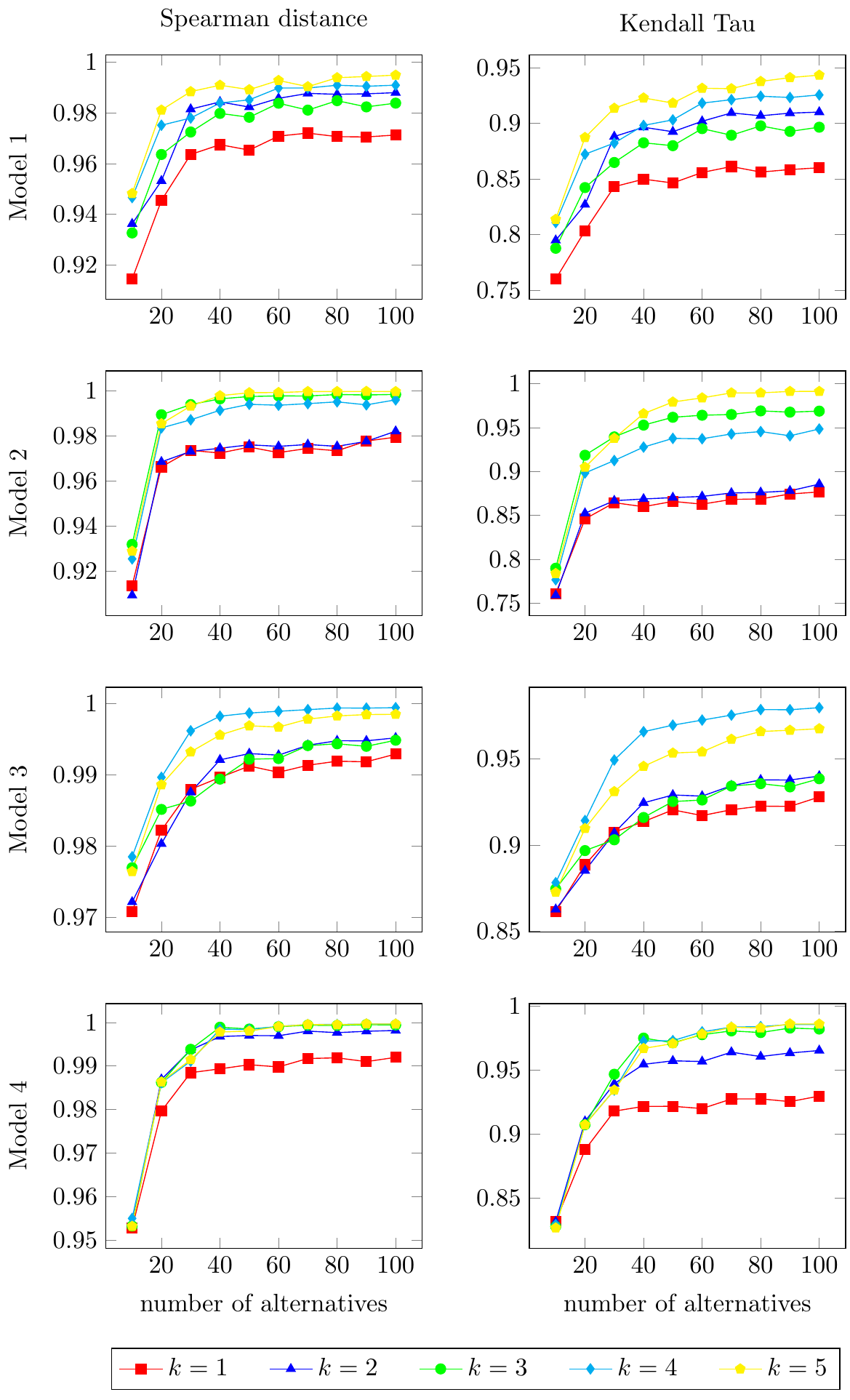}
	\caption{Average Spearman distance and Kendall Tau of the test set
	of models 1 to 4 learned by \utasp{} with
	marginals composed of polynomials of the third degree. The
	continuity at the breakpoints is ensured up to the second
	derivative.}
	\label{fig-plot_expe_utasp_npieces_test}
\end{figure}

\begin{figure}[!h]
	\centering
	\includegraphics{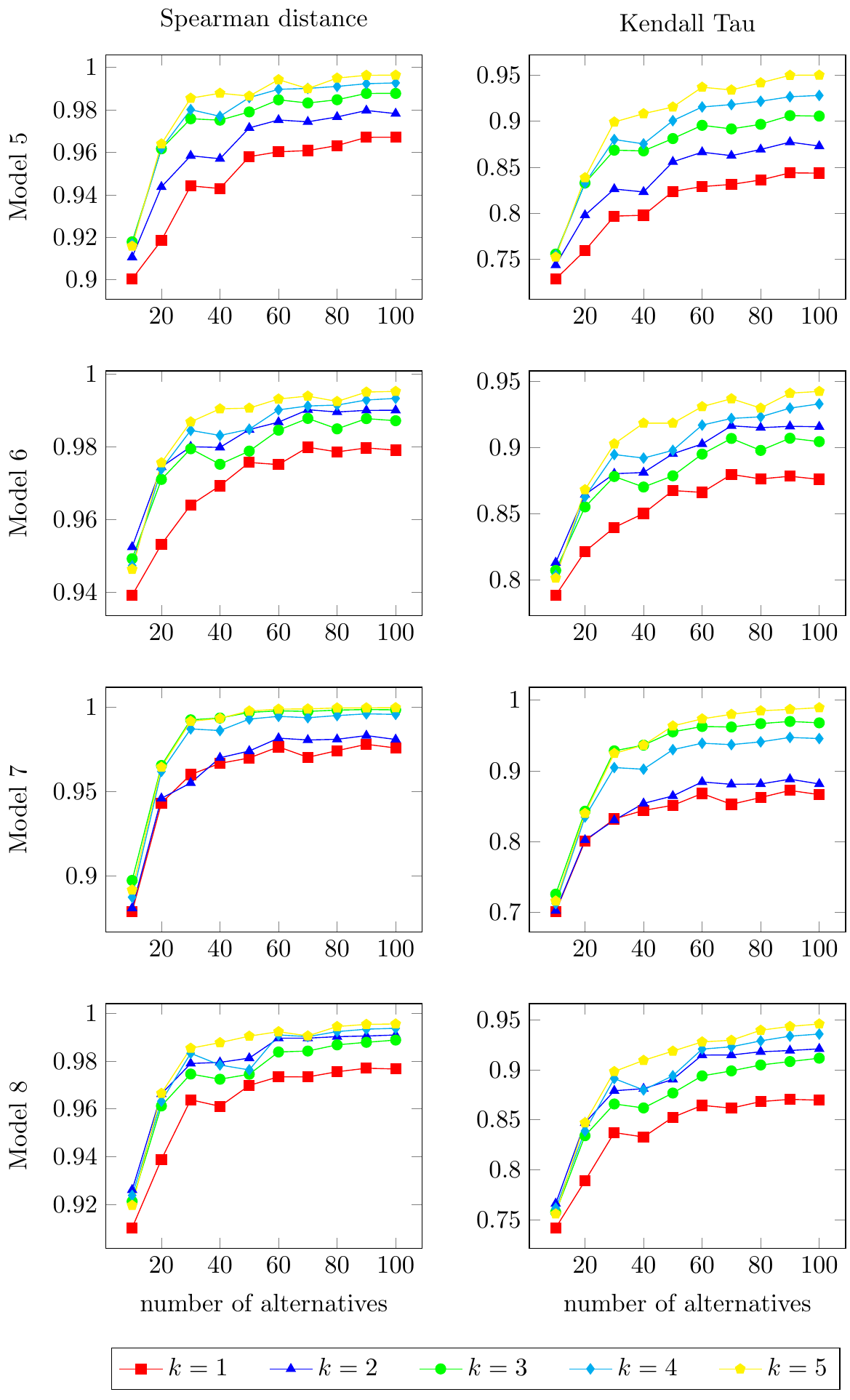}
	\caption{Average Spearman distance and Kendall Tau of the test set
	of models 5 to 8 learned by \utasp{} with
	marginals composed of polynomials of the third degree. The
	continuity at the breakpoints is ensured up to the second
	derivative.}
	\label{fig-plot_expe_utasp_npieces_5criteria_test}
\end{figure}

\end{appendix}

\end{document}